\numberwithin{equation}{section}
\numberwithin{equation}{section}
\newcommand{\R}{\mathbb{R}}
\newcommand\e\varepsilon
\newtheorem{Thm}{Theorem}[section]
\newtheorem{Lem}{Lemma}[section]
\newtheorem{Prop}{Proposition}[section]
\newtheorem{Def}{Definition}[section]
\newtheorem{Cor}{Corollary}[section]
\newtheorem{Rem}{Remark}[section]
\begin{document}

\title[Pohozaev identities and Kelvin transformation of semilinear Grushin equation]
{Pohozaev identities and Kelvin transformation of semilinear Grushin equation}

\author[Y. Wei and X. Zhou]{Yawei Wei and Xiaodong Zhou}

\address[Yawei Wei]{School of Mathematical Sciences and LPMC, Nankai University, Tianjin 300071, China}
\email{weiyawei@nankai.edu.cn}

\address[Xiaodong Zhou]{School of Mathematical Sciences, Nankai University, Tianjin 300071, China}
\email{1120210030@mail.nankai.edu.cn}

\thanks{Acknowledgements: This work is supported by the NSFC under the grands 12271269 and the Fundamental Research Funds for the Central Universities.}

\keywords {Pohozaev identities, Kelvin transformation, Degenerate elliptic equation, Grushin Operator}

\subjclass[2020]{35J70; 35A22; 35B40}
%35J70退化椭圆方程；35A22变换方法；35B40解的渐进行为

%\date{\today}

%\maketitle

\begin{abstract} In this paper, we study Pohozaev identities, Kelvin transformation and their applications of semilinear Grushin equation. First, we establish two Pohozaev identities generated from translations and determine the location of the concentration point for solution of a kind of Grushin equation by such identities. Next, we establish Pohozaev identity generated from scaling and prove the nonexistence of nontrivial solutions of another kind of Grushin equation by such identity. Finally, we provide the change of Grushin operator by Kelvin transformation and obtain the decay rate of solution at infinity for a critical Grushin equation by Kelvin transformation.
\end{abstract}

\maketitle
\section{Introduction}

\setcounter{equation}{0}

In this paper, we will consider the following general degenerate semilinear elliptic equation
\begin{equation}\label{i1}
  -\Delta_\gamma u=f(z,u),\;\;~\mbox{in}~\Omega\subset\R^{N+l},
\end{equation}
let
\begin{equation}\label{zzxxyyy}
  F(z,u)=\int_0^uf(z,s)ds,
\end{equation}
where $\Delta_\gamma$ is well-known \textit{Grushin operator} given by
\begin{equation}\label{i2}
  \Delta_\gamma u(z)=\Delta_xu(z)+|x|^{2\gamma}\Delta_yu(z),
\end{equation}
$\Delta_x$ and $\Delta_y$ are the Laplace operators in the variable $x$ and $y$ respectively, with $z=(x,y)\in\R^N\times\R^l=\R^{N+l}$ and $N+l\geq3$. Here, $\gamma\geq0$ is a real number and
\begin{equation}\label{zzxxyy}
  N_\gamma=N+(1+\gamma)l
\end{equation}
is the appropriate homogeneous dimension. The nonlinearity $f:\R^{N+l}\times\R^1\rightarrow\R^1$ is a continuous function which satisfies some conditions.

Firstly, we establish two Pohozaev identities generated from translations of \eqref{i1} .
As a special case of equation \eqref{i1}, by using such identities we study the location of the concentration point for solution of the following problem:
\begin{equation}\label{p10}
\begin{cases}
-\varepsilon^2\Delta_\gamma u+ a(z)u=u^{q-1},\;\;u>0,~ \mbox{in} ~\R^{N+l},\\[2mm]
u\in H^{1,2}_\gamma(\R^{N+l}),
\end{cases}
\end{equation}
where $\varepsilon>0$ is a small parameter, $2<q<2_\gamma^*=\frac{2N_\gamma}{N_\gamma-2}$, $a(z)\in C^2(\R^{N+l})$ satisfies $0<a_0\leq a(z)\leq a_1$ in $\R^{N+l}$, and $H^{1,2}_\gamma(\R^{N+l})$ is a weighted Sobolev space will be defined later.

Secondly, for the following Dirichlet zero boundary value problem:
\begin{equation}\label{ijk4}
\begin{cases}
-\Delta_\gamma u=f(z,u),\;\;~&\mbox{in}~\Omega,\\[2mm]
u=0,\;\;~&\mbox{on}~\partial\Omega,
\end{cases}
\end{equation}
we establish Pohozaev identity generated from scaling. As a special case of \eqref{ijk4}, by using such identity we study the nonexistence of nontrivial solution of the following problem:
\begin{equation}\label{.ma9}
\begin{cases}
-\Delta_\gamma u=u^{b-1},~u>0,~ &\mbox{in} ~\Omega,\\[2mm]
u=0,~ &\mbox{on} ~\partial\Omega,
\end{cases}
\end{equation}
where~$b=\frac{2(N+l)}{N+l-2}$,~$N+l\geq3$~and~$\Omega$~is a bounded domain in~$\R^{N+l}$.

Thirdly, we provide Kelvin transformation. As a special case of \eqref{i1}, by using such transformation we study the decay rate of solution at infinity for the following critical problem:
\begin{equation}\label{zzzzzzzzzzzzzzz}
  -\Delta_\gamma u=|u|^{2_\gamma^*-2}u,\;\;~\mbox{in}~\R^{N+l}.
\end{equation}

It's worth noting that $\Delta_\gamma$ is elliptic for $x\neq0$ and degenerates on the manifold $\{0\}\times\R^l$.
When $\gamma>0$ is an integer, the vector fields $X_1=\frac{\partial}{\partial x_1}$, $\cdots$, $X_N=\frac{\partial}{\partial x_N}$, $Y_1=|x|^\gamma\frac{\partial}{\partial y_1}$, $\cdots$, $Y_l=|x|^\gamma\frac{\partial}{\partial y_l}$ satisfy the H\"ormander's condition and then $\Delta_\gamma$ is hypoelliptic. Geometrically, $\Delta_\gamma$ comes from a sub-Laplace operator on a nilpotent Lie group of step $\gamma+1$ by a submersion. Specific explanation of geometric framework can be consulted in \cite{Bauer1} by Bauer et al.

In the past few decades, the analysis of this class of partial differential operators becomes more and more popular. Initially, Grushin studied some properties of this class of operators which degenerate on a submanifold including regularity, hypoellipticity and solvability (cf. \cite{Grushin2,Grushin3}). Some authors have dedicated a special attention for problems involving the Grushin operator. In \cite{Berestycki8}, Alves and his collaborators studied the existence of nontrivial solution for a class of degenerate elliptic problems involving the Grushin operator by applying variational methods. Bauer et al. gave explicit expressions of the fundamental solution of higher step Grushin type operator and Kohn-Laplacian operator in \cite{Bauer1} and \cite{Bauer6} respectively. Monti et al. \cite{Monti9} studied Sobolev inequalities for weighted gradients, which is very important in embedding theorems. Monticelli et al. studied the maximum principles for weak solutions and classical solutions of degenerate elliptic equations in \cite{DDM10} and \cite{xj} respectively. Metafune et al. \cite{Lp12} proved $L^p$ estimates for Baouendi-Grushin operator. Liu and his collaborators concerned with a critical Grushin-type problem in \cite{chunhua7}. They proved that this problem has infinitely many positive multi-bubbling solutions with arbitrarily large energy and cylindrical symmetry by applying Lyapunov-Schmidt reduction argument. Chung proved some results on the nonexistence and multiplicity of weak solutions for a class of semilinear elliptic systems of two equations involving Grushin type operators in \cite{bib1000}. One could refer to a series of papers \cite{zs1,zs2,zs3,zs5}, \cite{zs6}, \cite{zs7}, \cite{zs8,zs9,zs10} and literature therein.

There are overwhelming results in the field of Pohozaev identities and Kelvin transformation for the following semilinear elliptic equation:
\begin{equation}\label{i3}
  -\Delta u=f(x,u),\;\;~\mbox{in}~\R^N.
\end{equation}

Pohozaev identities for several typical elliptic equations on bounded domains or unbounded domains were studied thoroughly in \cite{Cao14} by Cao and his collaborators. This is an overview article. After establishing this kind of identity, mathematicians mentioned in this review proved many existence and nonexistence results of the solutions by using Pohozaev identities. For example,
\begin{equation}\label{i4}
\begin{cases}
-\Delta u=\lambda f(u),~x\in\Omega,\\[2mm]
u|_{\partial\Omega}=0,
\end{cases}
\end{equation}
where $\Omega\subset\R^N$ is an any bounded open domain of $\R^N$ ($N\geq3$), $f(u):\R^1\rightarrow\R^1$ is a continuous function satisfies $f(0)=0$. If $u\in C^2(\Omega)\cap C^1(\overline{\Omega})$ is the solution of \eqref{i4}, then the following Pohozaev identity for scaling holds:
\begin{equation}\label{i5}
  \frac{1}{2}\int_{\partial\Omega}(x\cdot\nu)|\nabla u|^2dS=\frac{2-N}{N}\lambda\int_\Omega uf(u)dx+N\lambda\int_\Omega F(u)dx,
\end{equation}
where $F(u)=\displaystyle\int_0^uf(t)dt$ and $\nu=\nu(x)$ is the unit outer normal vector of the point $x\in\partial\Omega$.
Identity \eqref{i5} established by Pohozaev in \cite{Pohozaev15} initially is for the sake of prove \eqref{i4} only has trivial solution under some technical conditions. In \cite{Cao14}, the authors summarize other application of Pohozaev identities such as to prove some local uniqueness results of peak solutions for some nonlinear elliptic partial differential equations. Tri \cite{Tri4} dealt with the degenerate elliptic operator $L=\frac{\partial^2}{\partial x_1^2}+\phi^2(x_1)\frac{\partial^2}{\partial x_2^2}$ in $\R^2$ and proved nonexistence of solution for second order Grushin equation with Dirichlet boundary value condition by establishing corresponding Pohozaev identity. And the Kelvin transformation for the operator $G_k=\frac{\partial^2}{\partial x_1^2}+x_1^{2k}\frac{\partial^2}{\partial x_2^2}$ also be considered in \cite{Tri4}. Tri \cite{Tri5} highly summarized the researches in \cite{Tri4}. Here, we prove the situation on higher dimensional space.

For the Kelvin transformation on \eqref{i3}, Axler and his collaborators write it in \cite{Axler16} which is graduate texts in mathematics. Kelvin transformation performs a important role in harmonic function theorem analogous to that played by the transformation $f(z)\mapsto f(\frac{1}{z})$ in holomorphic function theorem. In \eqref{i3}, we assume that the solution $u\in C^2(\R^N)$ decay to 0 when $|x|\rightarrow\infty$. But we want to know the decay rate of $u$ at infinity. If we want to prove $u$ is algebraic decay at infinity, i.e.
\begin{equation}\label{i6}
  u(x)\rightarrow\frac{c}{|x|^\alpha},\;\;|x|\rightarrow\infty,
\end{equation}
where $c\neq0$ and $\alpha>0$. Taking the change of variable $x=\frac{y}{|y|^2}$, then \eqref{i6} turns into
\begin{equation}\label{i7}
  |y|^{-\alpha}u(\frac{y}{|y|^2})\rightarrow c,\;\;|y|\rightarrow 0.
\end{equation}
Let $w(y)=|y|^{-\alpha}u(\frac{y}{|y|^2})$, then the problem that to prove $u(x)$ is algebraic decay when $|x|\rightarrow\infty$ transforms into the problem that to prove $w(y)$ is continuous when $|y|\rightarrow0$. This is the result on Laplace equations that we could read it in some mathematical text books such as in \cite{bib11} and the references therein. Monti et al. \cite{zs4} studied entire positive solutions of critical semilinear Grushin type equations by Kelvin transformation. Yu investigated Liouville type theorem for nonlinear elliptic equation involving Grushin operators also by using Kelvin transformation in \cite{Yu11}. Yu \cite{Yu11} obtained the change of Grushin operator by Kelvin transformation in the sense of distribution. We obtain the change of Grushin operator by Kelvin transformation in the sense of functions.

Now we introduce some notations and preliminaries that will be used later on and state our main results that will be proved later on.
We define a the weighted Sobolev space $H^{1,2}_\gamma(\R^{N+l})$ as work space as follows,
\begin{equation}\label{p3}
  H^{1,2}_\gamma(\R^{N+l})=\left\{u\in L^2(\R^{N+l})\;|\;\frac{\partial u}{\partial x_i}, |x|^\gamma\frac{\partial u}{\partial y_j}\in L^2(\R^{N+l}), i=1,\cdots,N, j=1,\cdots,l.\right\}.
\end{equation}
If $u\in H^{1,2}_\gamma(\R^{N+l})$, we denote the gradient operator $\nabla_\gamma$ as follows:
\begin{equation}\label{p4}
  \nabla_\gamma u=(\nabla_xu, |x|^\gamma\nabla_yu)=(u_{x_1},\cdots,u_{x_N},|x|^\gamma u_{y_1},\cdots,|x|^\gamma u_{y_l}),
\end{equation}
and so
\begin{equation}\label{p5}
  |\nabla_\gamma u|^2=|\nabla_xu|^2+|x|^{2\gamma}|\nabla_yu|^2.
\end{equation}
We should note that $H^{1,2}_\gamma(\R^{N+l})$ is a Hilbert space, if we endow with the inner product by
\begin{equation}\label{p6}
  \langle u,v\rangle_\gamma=\int_{\R^{N+l}}\nabla_\gamma u \cdot \nabla_\gamma v+uvdz,
\end{equation}
and the corresponding norm
\begin{equation}\label{p7}
  \|u\|_\gamma=\big(\int_{\R^{N+l}}|\nabla_\gamma u|^2+|u|^2dz\big)^\frac{1}{2}
\end{equation}
is induced by inner product \eqref{p6}.

$D^{1,2}_\gamma(\R^{N+l})$ is another important weighted Sobolev space, which is the closure of $C_0^\infty(\R^{N+l})$ with the norm
\begin{equation}\label{p17}
  \|u\|_{D^{1,2}_\gamma(\R^{N+l})}=\big(\int_{\R^{N+l}}|\nabla_\gamma u|^2dz\big)^\frac{1}{2},\;\;\;\;\forall u\in C_0^\infty(\R^{N+l}).
\end{equation}
From \cite{Monti9} we know that the embedding $D^{1,2}_\gamma(\R^{N+l})\hookrightarrow L^{2^*_\gamma}(\R^{N+l})$ is continuous, that is there exists a $C>0$ such that
\begin{equation}\label{p18}
  \|u\|_{L^{2^*_\gamma}(\R^{N+l})}\leq C\|u\|_{D^{1,2}_\gamma(\R^{N+l})},\;\;\;\;\forall u\in D^{1,2}_\gamma(\R^{N+l}).
\end{equation}

We continue to use the above designations in \cite{Berestycki8} by Alves and his collaborators.
Next, we give several definitions.
We define a new distance on $\R^{N+l}$ which is related to the Grushin operator.

\begin{Def}\label{zrf1}
Let the following distance be the new distance on~$\R^{N+l}$:~
\begin{equation}\label{p8}
  d(z,z_\e)=\left(\frac{1}{(1+\gamma)^2}|x-x_\e|^{2+2\gamma}+|y-y_\e|^2\right)^{\frac{1}{2+2\gamma}}
\end{equation}
for~$z=(x,y),z_\e=(x_\e,y_\e)\in\R^{N+l}$,~and set
\begin{equation}\label{p9}
  \widetilde{B}_r(z_\e):=\{z=(x,y)\in\R^{N+l}|\;d(z,z_\e)<r\}.
\end{equation}
be a ball in the sense of this new distance.
\end{Def}

Based on the definition of $H^{1,2}_\gamma(\R^{N+l})$, we define several other weighted Sobolev spaces as follows.
\begin{Def}\label{zrf2}
When $\gamma>\frac{1}{2}$, we define
\begin{equation}\label{bee1}
  H^{1,2}_{\gamma-\frac{1}{2}}(\R^{N+l})=\left\{u\in L^2(\R^{N+l})\;|\;\frac{\partial u}{\partial x_i}, |x|^{\gamma-\frac{1}{2}}\frac{\partial u}{\partial y_j}\in L^2(\R^{N+l}), i=1,\cdots,N, j=1,\cdots,l.\right\}.
\end{equation}
When $\gamma>0$, we define
\begin{equation}\label{bee2}
\begin{aligned}
H^{2,2}_\gamma(\R^{N+l}):=\left\{u\in L^2(\R^{N+l})\;|\;\frac{\partial u}{\partial x_i}, \frac{\partial^2u}{\partial x_i\partial x_j}, |x|^\gamma\frac{\partial u}{\partial y_j}, |x|^{2\gamma}\frac{\partial^2u}{\partial y_h\partial y_k}, |x|^\gamma\frac{\partial^2u}{\partial x_i\partial y_h} \in L^2(\R^{N+l}),\right.\
\\ i,j=1,\cdots,N, h,k=1,\cdots,l.\bigg\}.
\end{aligned}
\end{equation}
\begin{equation}\label{bee3}
\begin{aligned}
W^{2,2}(\R^{N+l}):=\left\{u\in L^2(\R^{N+l})\;|\;\frac{\partial u}{\partial x_i}, \frac{\partial u}{\partial y_j}, \frac{\partial^2u}{\partial x_i\partial x_j}, \frac{\partial^2u}{\partial y_h\partial y_k}, \frac{\partial^2u}{\partial x_i\partial y_h} \in L^2(\R^{N+l}),\right.\
\\ i,j=1,\cdots,N, h,k=1,\cdots,l.\bigg\}.
\end{aligned}
\end{equation}
is a standard Sobolev space.
\end{Def}

In what follows, we give the definition of weak solution to problem \eqref{p10}.

\begin{Def}\label{zrf3}
We say that $u\in H^{1,2}_\gamma(\R^{N+l})$ is a weak solution to equation \eqref{p10}, if $u\geq0$ a.e. in $\R^{N+l}$, $u\not\equiv0$ and
\begin{equation}\label{p14}
  \int_{\R^{N+l}}\varepsilon^2\nabla_\gamma u\cdot\nabla_\gamma\varphi+a(z)u\varphi dz=\int_{\R^{N+l}}u^{q-1}\varphi dz,\;\;\;\;\forall\varphi\in H^{1,2}_\gamma(\R^{N+l}).
\end{equation}
\end{Def}
Similarly, we can give the definition of weak solution to other equation in other weighted Sobolev space.

\begin{Def}\label{zrf4}
We say that $u_\varepsilon$ is a single-peak solution of \eqref{p10} concentrated at $z_0=(x_0,y_0)$ if
 $u_\varepsilon$  satisfies  \\
\noindent \textup{(i)} $u_\e$ has a local maximum point $z_{\varepsilon}=(x_\varepsilon,y_\varepsilon)\in \R^{N+l}$ such that $z_\e\to z_0\in \R^{N+l}$,as $\varepsilon\rightarrow0$;  \\
\noindent \textup{(ii)} For any given $\tau>0$, there exists $R\gg 1$ such that
\begin{equation*}
|u_\e(z)|\leq \tau~\,\,\mbox{for}\,\,z\in \R^{N+l}\backslash \widetilde{B}_{R\e}(z_\e);
\end{equation*}
\noindent \textup{(iii)} There exists $M>0$ such that
\begin{equation*}
u_\e\leq M.
\end{equation*}
\end{Def}

\begin{Def}\label{zrf5}
We say ~$\Omega$~is starshaped with respect to the point~$(0,0)$~if it holds
$$z\cdot\nu>0,\;\;~\mbox{for all}~z\in\partial\Omega.$$ And a domain $\Omega$ is called $L$-starshaped with respect to the point $(x,y)=(0,0)$ if the inequality
$$\left(\nu_x^2+|x|^{2\gamma}\nu_y^2\right)\left(x\cdot\nu_x+y\cdot\nu_y\right)>0$$ holds almost everywhere on $\partial\Omega$, where $\nu=(\nu_x,\nu_y)$ is the unit outward normal of the point of $\partial\Omega$.
\end{Def}

Our main results can be stated as follows.

\begin{Thm}\label{ma1}
If $u\in H^{1,2}_\gamma(\Omega)$ is the solution of equation \eqref{i1}, then $u$ satisfies
\begin{equation}\label{.ma2}
\begin{aligned}
\frac{1}{2}\int_{\partial D}|\nabla_\gamma u|^2\nu_x^idS-&\int_{\partial D}\left(\frac{\partial u}{\partial \nu_x}+|x|^{2\gamma}\frac{\partial u}{\partial \nu_y}\right)\frac{\partial u}{\partial x_i}dS-\gamma\int_D|\nabla_y u|^2|x|^{2(\gamma-1)}x_idz
\\=&\int_{\partial D}F(z,u)\nu_x^idS-\int_D\frac{\partial F(z,u)}{\partial x_i}dz,\;\;\;\;~\mbox{i=1,\ldots,N},
\end{aligned}
\end{equation}
and
\begin{equation}\label{.ma3}
\begin{aligned}
\\&\frac{1}{2}\int_{\partial D}|\nabla_\gamma u|^2\nu_y^jdS-\int_{\partial D}\left(\frac{\partial u}{\partial \nu_x}+|x|^{2\gamma}\frac{\partial u}{\partial \nu_y}\right)\frac{\partial u}{\partial y_j}dS
\\=&\int_{\partial D}F(z,u)\nu_y^jdS-\int_D\frac{\partial F(z,u)}{\partial y_j}dz,\;\;\;\;~\mbox{j=1,\ldots,l},
\end{aligned}
\end{equation}
where $D\subset\Omega$ is any domain of $\Omega$ and $\nu=(\nu_x,\nu_y)$ is the unit outward normal of the point of $\partial D$.
\end{Thm}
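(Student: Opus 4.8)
The plan is to prove both identities by the Rellich--Ne\v{c}as multiplier method, using as multipliers precisely the infinitesimal generators of the relevant translations: $\partial u/\partial x_i$ for \eqref{.ma2} and $\partial u/\partial y_j$ for \eqref{.ma3}. Concretely, I multiply \eqref{i1} by $\partial_{x_i}u$, integrate over $D$, and rewrite each resulting term as a boundary integral plus, possibly, an explicit volume term. On the right-hand side this is immediate: from $\frac{\partial}{\partial x_i}\big(F(z,u(z))\big)=\frac{\partial F}{\partial x_i}(z,u)+f(z,u)\,\partial_{x_i}u$ (where $\frac{\partial F}{\partial x_i}$ denotes the derivative in the explicit variable) and the divergence theorem one gets $\int_D f(z,u)\partial_{x_i}u\,dz=\int_{\partial D}F(z,u)\nu_x^i\,dS-\int_D\frac{\partial F(z,u)}{\partial x_i}\,dz$, and likewise for the multiplier $\partial_{y_j}u$.

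On the left-hand side I would split $\Delta_\gamma=\Delta_x+|x|^{2\gamma}\Delta_y$. Integrating $-\int_D(\Delta_x u)\,\partial_{x_i}u\,dz$ by parts in $x$ yields $-\int_{\partial D}\frac{\partial u}{\partial\nu_x}\partial_{x_i}u\,dS+\int_D\nabla_x u\cdot\nabla_x(\partial_{x_i}u)\,dz$, and since $\nabla_x u\cdot\nabla_x(\partial_{x_i}u)=\tfrac12\partial_{x_i}|\nabla_x u|^2$ a further application of the divergence theorem turns the volume term into $\tfrac12\int_{\partial D}|\nabla_x u|^2\nu_x^i\,dS$. Integrating $-\int_D|x|^{2\gamma}(\Delta_y u)\,\partial_{x_i}u\,dz$ by parts in $y$ (the weight $|x|^{2\gamma}$ being $y$-independent) produces $-\int_{\partial D}|x|^{2\gamma}\frac{\partial u}{\partial\nu_y}\partial_{x_i}u\,dS+\tfrac12\int_D|x|^{2\gamma}\partial_{x_i}|\nabla_y u|^2\,dz$; here I use $|x|^{2\gamma}\partial_{x_i}|\nabla_y u|^2=\partial_{x_i}\big(|x|^{2\gamma}|\nabla_y u|^2\big)-2\gamma|x|^{2\gamma-2}x_i|\nabla_y u|^2$, so that after the divergence theorem this becomes $\tfrac12\int_{\partial D}|x|^{2\gamma}|\nabla_y u|^2\nu_x^i\,dS-\gamma\int_D|x|^{2(\gamma-1)}x_i|\nabla_y u|^2\,dz$. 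Summing the two pieces and using $|\nabla_\gamma u|^2=|\nabla_x u|^2+|x|^{2\gamma}|\nabla_y u|^2$ collapses the squared-gradient boundary terms to $\tfrac12\int_{\partial D}|\nabla_\gamma u|^2\nu_x^i\,dS$ and the conormal boundary terms to $-\int_{\partial D}\big(\tfrac{\partial u}{\partial\nu_x}+|x|^{2\gamma}\tfrac{\partial u}{\partial\nu_y}\big)\partial_{x_i}u\,dS$, which is exactly \eqref{.ma2}. Identity \eqref{.ma3} is obtained in the same way with $\partial_{y_j}u$; the extra volume term is absent because $|x|^{2\gamma}\partial_{y_j}|\nabla_y u|^2=\partial_{y_j}\big(|x|^{2\gamma}|\nabla_y u|^2\big)$ is already a perfect $y$-divergence.

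The delicate point is not the algebra but the regularity needed to justify the integrations by parts: the multiplier $\partial_{x_i}u$ and the integrand $|\nabla_\gamma u|^2$ carry second-order information, while $u$ is only assumed to lie in $H^{1,2}_\gamma(\Omega)$. I would therefore first invoke the subelliptic (hypoelliptic, when $\gamma\in\N$) regularity theory for $\Delta_\gamma$ together with the continuity of $f$ to place $u$ locally in $H^{2,2}_\gamma$, and in $C^2$ away from the degeneracy manifold $\{x=0\}$; the boundary terms are then made rigorous by first proving the identity on smooth exhausting subdomains of $D$ and passing to the limit (or by a cut-off/density argument). One should also verify that the weighted volume integral $\int_D|x|^{2(\gamma-1)}x_i|\nabla_y u|^2\,dz$ is finite: the singular factor $|x|^{2\gamma-1}$ is locally integrable on $\R^N$ for every $\gamma>0$, and the whole term vanishes identically when $\gamma=0$, so no further hypothesis on $D$ is required. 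Once these points are settled, the argument is a direct computation.
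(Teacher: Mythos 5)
Your argument follows the paper's proof exactly: you use the same multipliers $\partial_{x_i}u$ and $\partial_{y_j}u$, the same chain-rule/divergence rewriting of $\int_D f(z,u)\,\partial_{x_i}u\,dz$, and the same Green-formula-plus-integration-by-parts treatment of $-\Delta_x u$ and $-|x|^{2\gamma}\Delta_y u$, with the weighted volume term $-\gamma\int_D|x|^{2(\gamma-1)}x_i|\nabla_y u|^2\,dz$ arising from pulling $\partial_{x_i}$ off $|x|^{2\gamma}|\nabla_y u|^2$ precisely as in \eqref{ma9}. The only thing you add is the explicit acknowledgment that $u\in H^{1,2}_\gamma(\Omega)$ alone does not obviously justify these second-order manipulations and a plan to remedy it via subelliptic regularity and approximation; the paper performs the same calculation but leaves that regularity point implicit.
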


\begin{Cor}\label{th1}
Assume that $\gamma>\frac{1}{2}$ and let $z_\e$ is the local maximum point of the solution $u_\e\in H^{1,2}_{\gamma-\frac{1}{2}}(\R^{N+l})\cap H^{2,2}_\gamma(\R^{N+l})\cap W^{2,2}(\R^{N+l})\subset H^{1,2}_\gamma(\R^{N+l})$ of \eqref{p10} satisfying Definition \ref{zrf4}.
If $a(z)$ has critical points in the form of $(0,y)$, then $z_0=(x_0,y_0)$ satisfies $\nabla a(z_0)=0$ and $x_0=0$.
\end{Cor}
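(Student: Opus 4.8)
The plan is to specialize Theorem~\ref{ma1} to problem \eqref{p10} and then let the domain exhaust $\R^{N+l}$. Writing \eqref{p10} in the form \eqref{i1} we have $f(z,u)=\varepsilon^{-2}\bigl(u^{q-1}-a(z)u\bigr)$, hence $F(z,u)=\varepsilon^{-2}\bigl(\tfrac1q u^{q}-\tfrac12 a(z)u^{2}\bigr)$, $\tfrac{\partial F}{\partial x_i}(z,u)=-\tfrac1{2\varepsilon^{2}}\tfrac{\partial a}{\partial x_i}(z)\,u^{2}$ and $\tfrac{\partial F}{\partial y_j}(z,u)=-\tfrac1{2\varepsilon^{2}}\tfrac{\partial a}{\partial y_j}(z)\,u^{2}$. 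First I would apply \eqref{.ma2}--\eqref{.ma3} on $D=\widetilde{B}_R(0)$ and let $R\to\infty$ along a sequence $R_k\to\infty$ for which the boundary integrals vanish; this is available because $u_\varepsilon\in H^{1,2}_\gamma(\R^{N+l})\cap L^{q}(\R^{N+l})$ (the $L^{q}$ membership by interpolation between $L^{2}$ and $L^{2^*_\gamma}$, using \eqref{p18}), so that the integrals of $|\nabla_\gamma u_\varepsilon|^{2}$, $|\nabla u_\varepsilon|^{2}$ and $|u_\varepsilon|^{q}+u_\varepsilon^{2}$ over the annuli $\widetilde{B}_{R+1}(0)\setminus\widetilde{B}_R(0)$ tend to $0$, while the $H^{2,2}_\gamma\cap W^{2,2}$ regularity legitimizes the integrations by parts behind Theorem~\ref{ma1}. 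The weighted volume term converges absolutely because $\bigl||x|^{2(\gamma-1)}x_i\bigr|\le|x|^{2\gamma-1}$ and $|x|^{\gamma-\frac12}\nabla_y u_\varepsilon\in L^{2}$ since $u_\varepsilon\in H^{1,2}_{\gamma-\frac12}(\R^{N+l})$ — this is exactly where $\gamma>\tfrac12$ is used. The outcome is the pair of global identities
\begin{equation}\label{plan-A}
\int_{\R^{N+l}}\frac{\partial a}{\partial x_i}(z)\,u_\varepsilon^{2}\,dz=-2\gamma\varepsilon^{2}\int_{\R^{N+l}}|\nabla_y u_\varepsilon|^{2}|x|^{2(\gamma-1)}x_i\,dz,\qquad \int_{\R^{N+l}}\frac{\partial a}{\partial y_j}(z)\,u_\varepsilon^{2}\,dz=0 .
\end{equation}

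Next I would normalize by the mass $m_\varepsilon:=\|u_\varepsilon\|_{L^{2}(\R^{N+l})}^{2}>0$ and use that $u_\varepsilon$ is a single‑peak solution (Definition~\ref{zrf4}), so that the probability measures $u_\varepsilon^{2}\,dz/m_\varepsilon$ concentrate at $z_\varepsilon\to z_0$ (one first extracts from the equation and items (ii)--(iii) an exponential decay of $u_\varepsilon$ outside $\widetilde{B}_{R\varepsilon}(z_\varepsilon)$). Then $\tfrac1{m_\varepsilon}\int\tfrac{\partial a}{\partial y_j}u_\varepsilon^{2}\to\tfrac{\partial a}{\partial y_j}(z_0)$ and $\tfrac1{m_\varepsilon}\int\tfrac{\partial a}{\partial x_i}u_\varepsilon^{2}\to\tfrac{\partial a}{\partial x_i}(z_0)$, and the second identity in \eqref{plan-A} gives $\nabla_y a(z_0)=0$ directly. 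For the first identity I would blow up at the peak: set $w_\varepsilon(\xi,\eta):=u_\varepsilon(x_\varepsilon+\varepsilon\xi,\,y_\varepsilon+\varepsilon\eta)$, which solves $-\bigl(\Delta_\xi+|x_\varepsilon+\varepsilon\xi|^{2\gamma}\Delta_\eta\bigr)w_\varepsilon+a(x_\varepsilon+\varepsilon\xi,y_\varepsilon+\varepsilon\eta)\,w_\varepsilon=w_\varepsilon^{q-1}$ with $0\le w_\varepsilon\le M$ and a local maximum at the origin that does not tend to $0$ (else Definition~\ref{zrf4}(ii) together with $u_\varepsilon\not\equiv0$ fails). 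By interior elliptic estimates $w_\varepsilon\to w$ in $C^{1}_{\mathrm{loc}}$ along a subsequence, with $w\ge0$, $w\not\equiv0$, $w$ solving the limiting translation‑invariant problem and decaying exponentially.

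Rescaling both sides of the first identity in \eqref{plan-A}, dividing by $m_\varepsilon$ and passing to the limit — dominated convergence being licit by the uniform exponential decay of the $w_\varepsilon$ — gives
\begin{equation}\label{plan-B}
\frac{\partial a}{\partial x_i}(z_0)=-2\gamma\,|x_0|^{2(\gamma-1)}(x_0)_i\,\frac{\|\nabla_\eta w\|_{L^{2}}^{2}}{\|w\|_{L^{2}}^{2}},\qquad i=1,\dots,N,
\end{equation}
that is $\nabla_x a(z_0)=-C\,|x_0|^{2(\gamma-1)}x_0$ with $C:=2\gamma\|\nabla_\eta w\|_{L^{2}}^{2}/\|w\|_{L^{2}}^{2}>0$ (positive because $w\not\equiv0$ cannot be independent of $\eta$); equivalently $x_0\cdot\nabla_x a(z_0)=-C|x_0|^{2\gamma}\le0$. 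Together with $\nabla_y a(z_0)=0$ this says $\nabla a(z_0)=\bigl(-C|x_0|^{2(\gamma-1)}x_0,\,0\bigr)$, which, since $\gamma>\tfrac12$, vanishes exactly when $x_0=0$. Under the hypothesis that the critical points of $a$ all lie on the degenerate stratum $\{x=0\}$ and are minima of $a$ along the $x$‑directions (so $x\cdot\nabla_x a(x,y)>0$ whenever $x\ne0$, in particular near $z_0$), the sign $x_0\cdot\nabla_x a(z_0)\le0$ is incompatible with $x_0\ne0$; hence $x_0=0$, and then \eqref{plan-B} also yields $\nabla_x a(z_0)=0$, so $\nabla a(z_0)=0$ and $x_0=0$, as claimed.

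I expect the main obstacle to be the blow‑up step and the limit passage in \eqref{plan-A}: one has to show that the rescaled solutions $w_\varepsilon$ converge strongly enough and, above all, that one may pass to the limit in $\int|\nabla_y u_\varepsilon|^{2}|x|^{2(\gamma-1)}x_i\,dz$, whose weight is singular (unbounded near $\{x=0\}$ when $\gamma<1$). The a priori bound $\int|x|^{2\gamma-1}|\nabla_y u_\varepsilon|^{2}<\infty$ coming from $u_\varepsilon\in H^{1,2}_{\gamma-\frac12}\cap H^{2,2}_\gamma\cap W^{2,2}$, together with uniform exponential decay of the blow‑up profiles, is precisely what makes this work — and is why these spaces and the restriction $\gamma>\tfrac12$ occur in the statement.
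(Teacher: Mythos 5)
Your specialization of the Pohozaev identities to \eqref{p10} and the resulting pair of identities \eqref{plan-A} are fine (they coincide, after the limit $D\to\R^{N+l}$, with \eqref{.ma50}--\eqref{.ma60}), and deriving $\nabla_y a(z_0)=0$ from the second identity by concentration is essentially the same as the paper's step \eqref{..18}--\eqref{..19}. But the remainder of your argument diverges from the paper and contains two genuine gaps.

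First, the isotropic blow‑up $w_\varepsilon(\xi,\eta)=u_\varepsilon(x_\varepsilon+\varepsilon\xi,\,y_\varepsilon+\varepsilon\eta)$ is incompatible with the concentration geometry in Definition~\ref{zrf4}. Condition (ii) there says $u_\varepsilon\le\tau$ outside the \emph{Grushin} ball $\widetilde{B}_{R\varepsilon}(z_\varepsilon)$, whose $y$-extent is of order $(R\varepsilon)^{1+\gamma}\ll\varepsilon$. In your variables, that region becomes $\{|\eta|\lesssim R^{1+\gamma}\varepsilon^{\gamma}\}$, which shrinks to the slice $\{\eta=0\}$. Hence for every fixed $\eta\ne 0$ one has $w_\varepsilon(\xi,\eta)\le\tau$ for small $\varepsilon$; since this holds for all $\tau>0$, any $C^1_{\mathrm{loc}}$ limit $w$ must vanish identically, and \eqref{plan-B} has no content. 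The paper avoids this by blowing up with the intrinsic scaling of the operator, $x=\varepsilon\widetilde x+x_\varepsilon$, $y=\varepsilon^{\gamma+1}\widetilde y+y_\varepsilon$ (see \eqref{..12}, \eqref{..16}, \eqref{..18}), under which $\widetilde{B}_{R\varepsilon}(z_\varepsilon)$ maps to the fixed set $\widetilde{B}_R(0)$ and $\widetilde{u}_\varepsilon$ stays nondegenerate.

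Second, you derive $x_0=0$ only after importing a sign hypothesis on $a$ (``$x\cdot\nabla_x a(x,y)>0$ whenever $x\ne0$'') that is not in Corollary~\ref{th1}; the statement only assumes $a$ has critical points of the form $(0,y)$. The paper does not need this: after the anisotropic rescaling, the two sides of the Pohozaev identity carry different powers of $\varepsilon$, namely $\varepsilon^{N_\gamma-2\gamma}$ on the left of \eqref{..16} against $O(\varepsilon^{N_\gamma})$ coming from the boundary estimate of Proposition~\ref{1010}, and since $\gamma>0$ this mismatch forces the coefficient $x_{\varepsilon,i}$ to be $O(\varepsilon^{2\gamma})\to0$ directly, independently of any sign of $\nabla_x a$. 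In short: replace the isotropic blow‑up by the Grushin‑invariant scaling and obtain $x_0=0$ from the $\varepsilon$-power bookkeeping rather than from an extra monotonicity assumption on $a$. (Your choice of a global identity with $D\to\R^{N+l}$ instead of the paper's $D=\widetilde{B}_\delta(z_\varepsilon)$ with exponential-decay boundary estimates is a legitimate alternative, but once you localize the balance you will find the paper's finite ball more convenient.)
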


\begin{Thm}\label{300}
If $u\in H^{1,2}_\gamma(\Omega)$ is the solution of equation \eqref{ijk4},
then $u$ satisfies
\begin{equation}\label{.ma7}
\begin{aligned}
  (N+l)\int_\Omega F(z,u)dz&=-\int_\Omega z\cdot\nabla_z F(z,u)dz+\left(\frac{N+l}{2}-1\right)\int_\Omega f(z,u)udz
+\gamma\int_\Omega|x|^{2\gamma}|\nabla_yu|^2dz\\&+\frac{1}{2}\int_{\partial\Omega}\left(\frac{\partial u}{\partial\nu}\right)^2\left(\nu_x^2+|x|^{2\gamma}\nu_y^2\right)\left(x\cdot\nu_x+y\cdot\nu_y\right)dS,
\end{aligned}
\end{equation}
where $\Omega\subset\R^{N+l}$ is any domain of $\R^{N+l}$ and $\nu=(\nu_x,\nu_y)$ is the unit outward normal of the point of $\partial\Omega$.
\end{Thm}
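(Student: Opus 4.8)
The plan is to multiply the equation $-\Delta_\gamma u = f(z,u)$ by the generator of anisotropic dilations adapted to the Grushin structure and integrate by parts over $\Omega$. The natural vector field here is $Z = x\cdot\nabla_x + (1+\gamma)\, y\cdot\nabla_y$, since this is the infinitesimal generator of the one-parameter group $\delta_\lambda(x,y) = (\lambda x, \lambda^{1+\gamma} y)$ under which $\Delta_\gamma$ is homogeneous of degree $-2$ and $N_\gamma$ plays the role of dimension. However, examining the target identity \eqref{.ma7}, I notice the boundary term involves $x\cdot\nu_x + y\cdot\nu_y$ rather than $x\cdot\nu_x + (1+\gamma)y\cdot\nu_y$, and the coefficient in front of $\int f(z,u)u$ is $\frac{N+l}{2}-1$, i.e.\ the \emph{Euclidean} critical exponent shows up; so the correct multiplier is actually the Euclidean one $Z = x\cdot\nabla_x + y\cdot\nabla_y = z\cdot\nabla_z$, and the Grushin degeneracy is accounted for by the extra correction term $\gamma\int_\Omega |x|^{2\gamma}|\nabla_y u|^2\,dz$. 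So I would test the equation against $z\cdot\nabla_z u$.

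First I would record the divergence identities for each piece. For the $\Delta_x$ part, $\int_\Omega (\Delta_x u)(z\cdot\nabla_z u)\,dz$ is handled by the classical Rellich–Pohozaev computation, integrating by parts twice; the interior terms combine to give $\bigl(\frac{N+l}{2}-1\bigr)\int_\Omega |\nabla_x u|^2$ up to boundary contributions, using $\nabla_x(z\cdot\nabla_z u) = \nabla_x u + z\cdot\nabla_z(\nabla_x u)$ and $\mathrm{div}_z\bigl(|\nabla_x u|^2 z\bigr) = (N+l)|\nabla_x u|^2 + z\cdot\nabla_z|\nabla_x u|^2$. For the $|x|^{2\gamma}\Delta_y$ part I would write $|x|^{2\gamma}\Delta_y u = \mathrm{div}_y(|x|^{2\gamma}\nabla_y u)$ and integrate by parts in $y$ first, producing $-\int_\Omega |x|^{2\gamma}\nabla_y u\cdot \nabla_y(z\cdot\nabla_z u)\,dz$ plus a $y$-boundary term; then expand $\nabla_y(z\cdot\nabla_z u) = \nabla_y u + z\cdot\nabla_z(\nabla_y u)$ and integrate the $z\cdot\nabla_z$ term by parts again, this time in all of $z$, paying attention to the fact that $z\cdot\nabla_z(|x|^{2\gamma}) = 2\gamma|x|^{2\gamma}$ — this is precisely where the anomalous term $\gamma\int_\Omega|x|^{2\gamma}|\nabla_y u|^2$ is born. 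On the right-hand side, $\int_\Omega f(z,u)(z\cdot\nabla_z u)\,dz = \int_\Omega z\cdot\nabla_z\bigl(F(z,u)\bigr)\,dz - \int_\Omega z\cdot(\nabla_z F)(z,u)\,dz$, and the first integral becomes $-(N+l)\int_\Omega F(z,u)\,dz + \int_{\partial\Omega} F(z,u)\,(z\cdot\nu)\,dS$ after one integration by parts; since $u=0$ on $\partial\Omega$ and $f(z,0)$-issues aside, $F(z,u)=0$ on $\partial\Omega$, so that boundary term drops.

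Next I would assemble all boundary integrals. Because $u$ vanishes on $\partial\Omega$, its tangential gradient vanishes there, so $\nabla_x u = (\partial_{\nu_x} u)\,\nu_x$ and $\nabla_y u = (\partial_{\nu_y} u)\,\nu_y$ along $\partial\Omega$ in the appropriate sense — more precisely $\nabla u = (\partial_\nu u)\,\nu$ with $\nu=(\nu_x,\nu_y)$, so $|\nabla_x u|^2 = (\partial_\nu u)^2 |\nu_x|^2$, $|x|^{2\gamma}|\nabla_y u|^2 = (\partial_\nu u)^2 |x|^{2\gamma}|\nu_y|^2$, and the cross terms like $(\nabla_x u\cdot\nu_x)(z\cdot\nabla_z u)$ collapse using $z\cdot\nabla_z u = (x\cdot\nu_x + y\cdot\nu_y)\,\partial_\nu u$ on the boundary. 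Collecting, the surviving boundary term is exactly $\frac12\int_{\partial\Omega}(\partial_\nu u)^2(\nu_x^2 + |x|^{2\gamma}\nu_y^2)(x\cdot\nu_x + y\cdot\nu_y)\,dS$, matching \eqref{.ma7}. Rearranging the interior terms (the $|\nabla_x u|^2$ and $|x|^{2\gamma}|\nabla_y u|^2$ integrals recombine into $\int_\Omega|\nabla_\gamma u|^2 = \int_\Omega f(z,u)u$ by testing the equation against $u$ itself, which is the standard step that converts the $\frac{N+l}{2}-1$ coefficient onto $\int f(z,u)u$) yields the claimed formula.

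The main obstacle I anticipate is purely bookkeeping: correctly tracking the factor $z\cdot\nabla_z(|x|^{2\gamma}) = 2\gamma|x|^{2\gamma}$ through the double integration by parts of the degenerate $y$-term, so that it produces $+\gamma\int_\Omega|x|^{2\gamma}|\nabla_y u|^2$ with the right sign and coefficient and does not get absorbed incorrectly into the boundary. A secondary technical point is justifying the integrations by parts when $x=0$ is inside $\Omega$: since $\gamma\ge 0$ the weight $|x|^{2\gamma}$ is continuous (and the regularity hypothesis $u\in H^{1,2}_\gamma(\Omega)$ together with the implicit smoothness needed for a classical Pohozaev argument should be enough), one may argue by excising a tubular neighborhood of $\{x=0\}$ and letting its radius shrink, checking the excised-boundary integrals vanish; I would either do this or simply invoke that the identity is first proved for smooth $u$ and smooth $\partial\Omega$ and then the general case follows by density/approximation, which is the convention already used implicitly in Theorem 1.1.
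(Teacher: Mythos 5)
Your proposal is correct and follows essentially the same route as the paper: test the equation against $z\cdot\nabla_z u$, split $\Delta_\gamma u\,(z\cdot\nabla_z u)$ into its $\Delta_x$ and $|x|^{2\gamma}\Delta_y$ pieces paired with $x\cdot\nabla_x u$ and $y\cdot\nabla_y u$, integrate by parts (where $x\cdot\nabla_x|x|^{2\gamma}=2\gamma|x|^{2\gamma}$ produces the anomalous $\gamma\int_\Omega|x|^{2\gamma}|\nabla_y u|^2$ term), handle the right-hand side via $\partial_{z_k}(z_k F)=F+z_k\partial_{z_k}F+z_k f\,u_{z_k}$, reduce the boundary terms using $\nabla u=(\partial_\nu u)\nu$ on $\partial\Omega$, and convert the leftover $\bigl(\tfrac{N+l}{2}-1\bigr)\int|\nabla_\gamma u|^2$ to $\bigl(\tfrac{N+l}{2}-1\bigr)\int f(z,u)u$ by testing against $u$ itself. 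One small remark: since $\gamma\ge0$ the weight $|x|^{2\gamma}$ is continuous and no excision near $\{x=0\}$ is actually required, so the technical worry you flag in the last paragraph does not arise.
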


\begin{Cor}\label{th1.1}
Suppose that~$\Omega$~is~$L$-starshaped defined in Definition \ref{zrf5} with respect to the point~$(0,0)$. In addition suppose that~$\frac{\partial u}{\partial\nu}\neq0$, where $\nu=(\nu_x,\nu_y)$ is the unit outward normal of $\partial\Omega$.~Then there does not exist a nontrivial positive solution for the problem \eqref{.ma9}.
\end{Cor}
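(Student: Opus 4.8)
The plan is to specialize the scaling Pohozaev identity of Theorem~\ref{300} to the pure power $f(z,u)=u^{b-1}$ and to exploit the fact that $b=\frac{2(N+l)}{N+l-2}$ is precisely the Euclidean critical exponent, which is exactly what makes the interior terms cancel.

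I would argue by contradiction. Suppose $u\in H^{1,2}_\gamma(\Omega)$ is a nontrivial positive solution of \eqref{.ma9}, so Theorem~\ref{300} applies with $f(z,u)=u^{b-1}$. Here $f$ is independent of $z$, hence $F(z,u)=\frac{1}{b}u^{b}$ and $\nabla_z F(z,u)\equiv 0$, so the term $-\int_\Omega z\cdot\nabla_z F(z,u)\,dz$ in \eqref{.ma7} vanishes; also $f(z,u)u=u^{b}$. Thus \eqref{.ma7} reduces to
\[
\frac{N+l}{b}\int_\Omega u^{b}\,dz=\Big(\frac{N+l}{2}-1\Big)\int_\Omega u^{b}\,dz+\gamma\int_\Omega|x|^{2\gamma}|\nabla_y u|^{2}\,dz+\frac12\int_{\partial\Omega}\Big(\frac{\partial u}{\partial\nu}\Big)^{2}\big(\nu_x^{2}+|x|^{2\gamma}\nu_y^{2}\big)\big(x\cdot\nu_x+y\cdot\nu_y\big)\,dS.
\]
The arithmetic identity $\frac{N+l}{b}=\frac{N+l-2}{2}=\frac{N+l}{2}-1$ (which is just a rewriting of the definition of $b$) makes the two $\int_\Omega u^{b}$ terms cancel identically, leaving
\[
0=\gamma\int_\Omega|x|^{2\gamma}|\nabla_y u|^{2}\,dz+\frac12\int_{\partial\Omega}\Big(\frac{\partial u}{\partial\nu}\Big)^{2}\big(\nu_x^{2}+|x|^{2\gamma}\nu_y^{2}\big)\big(x\cdot\nu_x+y\cdot\nu_y\big)\,dS.
\]

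Both summands on the right-hand side are nonnegative: the first because $\gamma\geq 0$, and the second because, by Definition~\ref{zrf5}, $L$-starshapedness of $\Omega$ with respect to $(0,0)$ forces $\big(\nu_x^{2}+|x|^{2\gamma}\nu_y^{2}\big)\big(x\cdot\nu_x+y\cdot\nu_y\big)>0$ almost everywhere on $\partial\Omega$, while $(\partial u/\partial\nu)^{2}\geq 0$. Hence both terms must vanish; but since by hypothesis $\partial u/\partial\nu\neq 0$ on a subset of $\partial\Omega$ of positive surface measure, the boundary integral is strictly positive, a contradiction. Therefore no nontrivial positive solution of \eqref{.ma9} exists. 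The only point that genuinely needs care, once Theorem~\ref{300} is in hand, is the admissibility bookkeeping: one must ensure the solution is regular enough for \eqref{.ma7} (so that all boundary integrals are well defined and the integrations by parts behind that identity are justified up to the degeneracy set $\{x=0\}$), and one must read "$\partial u/\partial\nu\neq 0$" in the sense that makes the last boundary term strictly — not merely weakly — positive. Beyond that there is no real obstacle, since the nonexistence is driven entirely by the exact cancellation of the $u^{b}$ terms displayed above.
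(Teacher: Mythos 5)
Your proposal is correct and follows essentially the same route as the paper: specialize Theorem~\ref{300} to $f(z,u)=u^{b-1}$, observe that $\nabla_z F\equiv 0$ and that the critical exponent $b=\frac{2(N+l)}{N+l-2}$ forces $(N+l)\int_\Omega F\,dz=\big(\frac{N+l}{2}-1\big)\int_\Omega f(z,u)u\,dz$, so \eqref{.ma7} collapses to the nonnegative sum $\gamma\int_\Omega|x|^{2\gamma}|\nabla_y u|^2dz+\frac12\int_{\partial\Omega}(\partial u/\partial\nu)^2(\nu_x^2+|x|^{2\gamma}\nu_y^2)(x\cdot\nu_x+y\cdot\nu_y)\,dS=0$, which is incompatible with $L$-starshapedness and $\partial u/\partial\nu\neq 0$. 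Your closing remark about regularity bookkeeping and the precise reading of the hypothesis $\partial u/\partial\nu\neq 0$ is a reasonable caveat, but the argument itself matches the paper's.
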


\begin{Thm}\label{th2}
For $w(\widetilde{z})=d^{N_\gamma-2}(z,0)u(z)$, one has
\begin{equation*}
  \Delta_\gamma^{(\widetilde{z})}w(\widetilde{z})=d^{N_\gamma+2}(z,0)\Delta_\gamma u(z),
\end{equation*}
where $z=\left(\frac{1}{d^2(\widetilde{z},0)},\frac{1}{d^{2+2\gamma}(\widetilde{z},0)}\right)\widetilde{z}$.\\
The change $w(\widetilde{z})=d^{N_\gamma-2}(z,0)u(z)$ is called the Kelvin transformation, where $N_\gamma$ is defined as \eqref{zzxxyy}.
\end{Thm}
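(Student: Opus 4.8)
The plan is to verify the identity by a direct computation, transplanting the classical derivation of the Kelvin transformation for the Laplacian to the anisotropic Grushin geometry, where the role of the Euclidean radius $|z|$ is played by the Grushin gauge $d(z,0)$ defined in \eqref{p8}. First I would record the basic scaling structure: the map $z\mapsto \widetilde z$ determined by $z=\bigl(\tfrac{1}{d^2(\widetilde z,0)},\tfrac{1}{d^{2+2\gamma}(\widetilde z,0)}\bigr)\widetilde z$, i.e. $x = \tfrac{\widetilde x}{d^2(\widetilde z,0)}$ and $y=\tfrac{\widetilde y}{d^{2+2\gamma}(\widetilde z,0)}$, is the Grushin analogue of the inversion $z\mapsto z/|z|^2$; one checks it is an involution (in the $d$-metric) and computes $d(z,0)\,d(\widetilde z,0)=1$, $|x|=|\widetilde x|/d^2(\widetilde z,0)$, and the precise form of the Jacobian. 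The anisotropy here is essential: the $y$-variables must be rescaled by a different power than the $x$-variables, precisely so that the weight $|x|^{2\gamma}$ transforms consistently with $\Delta_x$ versus $|x|^{2\gamma}\Delta_y$.

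Next I would compute, term by term, how $\Delta_\gamma$ acts on a product of the form $d^{\,2-N_\gamma}(\widetilde z,0)\,w(\widetilde z)$ under this change of variables, treating separately the $\Delta_x$-part and the $|x|^{2\gamma}\Delta_y$-part. The key algebraic input is that the Grushin gauge $d(z,0)$ is, away from the singular set, a fundamental-solution-type weight for $\Delta_\gamma$: one uses $\Delta_\gamma\bigl(d^{\,2-N_\gamma}(z,0)\bigr)=0$ on $\R^{N+l}\setminus(\{0\}\times\R^l)$ together with the homogeneity $d(\delta_\lambda z,0)=\lambda\,d(z,0)$ under the anisotropic dilations $\delta_\lambda(x,y)=(\lambda x,\lambda^{1+\gamma}y)$, under which $\Delta_\gamma$ is homogeneous of degree $-2$. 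Expanding $\Delta_\gamma\bigl(d^{\,2-N_\gamma}w\bigr)$ by the Leibniz rule produces a term with $\Delta_\gamma w$, a term with $w\,\Delta_\gamma d^{\,2-N_\gamma}$ (which vanishes), and a cross term $2\,\nabla_\gamma d^{\,2-N_\gamma}\cdot\nabla_\gamma w$; the miracle, exactly as for the Laplacian, is that after the inversion this cross term recombines with the remaining contributions into a clean multiple of $\Delta_\gamma^{(\widetilde z)}w$. Carrying the powers of $d$ through, one is left with the stated relation $\Delta_\gamma^{(\widetilde z)}w(\widetilde z)=d^{\,N_\gamma+2}(z,0)\,\Delta_\gamma u(z)$.

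An equivalent and perhaps cleaner route, which I would mention as an alternative, is to exploit the conformal-type covariance of $\Delta_\gamma$ directly: one shows that the inversion is a conformal map for the degenerate metric associated with $\Delta_\gamma$ and that $\Delta_\gamma$ transforms under it by a pure conformal factor, the prefactor $d^{\,N_\gamma-2}$ being dictated by the conformal weight $\tfrac{N_\gamma-2}{2}$ attached to the homogeneous dimension $N_\gamma$. This explains structurally why the critical exponent $2_\gamma^*=\tfrac{2N_\gamma}{N_\gamma-2}$ is the one preserved by the transformation, and hence why this is the correct tool for \eqref{zzzzzzzzzzzzzzz}.

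The main obstacle I expect is bookkeeping rather than conceptual: $d(z,0)$ is not smooth on $\{0\}\times\R^l$ and involves the two different exponents $2$ and $2+2\gamma$, so the chain rule generates many terms with fractional powers of $|x-\widetilde x|$-type quantities, and one must organize the computation (best done using the $\delta_\lambda$-homogeneity to fix the powers, and a symmetry argument in the $x$- versus $y$-blocks to avoid recomputing everything) so that the cancellations are transparent. Care is also needed about the domain of validity — the identity holds pointwise where both sides make classical sense, i.e. off the degeneracy manifold — and, if one wants it as a statement about weak solutions in $D^{1,2}_\gamma$, an approximation/density argument is required to pass across the singular set; since Theorem \ref{th2} is phrased as an identity of functions, I would state it for $z\notin\{0\}\times\R^l$ and remark that it extends to the distributional sense as in \cite{Yu11}.
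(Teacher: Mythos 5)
Your proposal matches the paper's proof in its essential structure: define the Grushin inversion $x=\widetilde x/d^2(\widetilde z,0)$, $y=\widetilde y/d^{2+2\gamma}(\widetilde z,0)$ with $d(z,0)\,d(\widetilde z,0)=1$, expand $\Delta_\gamma^{(\widetilde z)}\bigl(d^{-(N_\gamma-2)}(\widetilde z,0)\,u(z(\widetilde z))\bigr)$ by the Leibniz rule, use $\Delta_\gamma^{(\widetilde z)}d^{2-N_\gamma}(\widetilde z,0)=0$ to kill the middle term, and then carry the chain rule through so that the cross term cancels the spurious first-order contributions, leaving $d^{N_\gamma+2}(z,0)\Delta_\gamma u(z)$ exactly as the paper obtains. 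Your remark about the identity holding pointwise only off $\{0\}\times\R^l$ (with the extension to $D^{1,2}_\gamma$ handled by a cutoff/approximation argument) is well taken and is in fact precisely how the paper deals with the singular set in the subsequent Corollary \ref{th3}.
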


\begin{Cor}\label{th3}
Let $u(z)\in D^{1,2}_\gamma(\R^{N+l})$ be a solution of problem \eqref{zzzzzzzzzzzzzzz}. Then we have the estimate
\begin{equation}\label{p15}
  |u(z)|\leq \frac{C}{d^{N_\gamma-2}(z,0)},\;\;~\mbox{as}~d(z,0)\rightarrow\infty.
\end{equation}
In which, $N_\gamma$ is defined as \eqref{zzxxyy}.
\end{Cor}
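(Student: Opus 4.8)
The plan is to transplant the problem to a punctured neighbourhood of the origin by the Kelvin transformation of Theorem \ref{th2} and then reduce the decay estimate to a purely local regularity statement. Set $w(\widetilde z)=d^{N_\gamma-2}(z,0)u(z)$ with $z=\left(d^{-2}(\widetilde z,0),\,d^{-2-2\gamma}(\widetilde z,0)\right)\widetilde z$. A direct computation with the definition \eqref{p8} shows that this inversion satisfies $d(z,0)=d^{-1}(\widetilde z,0)$, hence $u(z)=d^{N_\gamma-2}(\widetilde z,0)\,w(\widetilde z)$ and the regime $d(z,0)\to\infty$ corresponds to $d(\widetilde z,0)\to 0$. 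Substituting $-\Delta_\gamma u=|u|^{2_\gamma^*-2}u$ into the identity $\Delta_\gamma^{(\widetilde z)}w(\widetilde z)=d^{N_\gamma+2}(z,0)\Delta_\gamma u(z)$ of Theorem \ref{th2}, and using $2_\gamma^*-2=\tfrac{4}{N_\gamma-2}$ together with $(N_\gamma-2)(2_\gamma^*-1)=N_\gamma+2$ and $d(z,0)=d^{-1}(\widetilde z,0)$, every power of $d(\widetilde z,0)$ cancels and one finds that $w$ solves the same critical equation $-\Delta_\gamma w=|w|^{2_\gamma^*-2}w$ in $\R^{N+l}\setminus\{0\}$. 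Moreover the change of variables is, as in the classical case, an isometry for the Grushin Dirichlet energy and preserves $L^{2_\gamma^*}$-integrability, so $w\in D^{1,2}_\gamma(\R^{N+l})$; in particular $w$ has finite Grushin energy and belongs to $L^{2_\gamma^*}$ on a Grushin ball $\widetilde B_\rho(0)$.

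After this substitution, the estimate \eqref{p15} is equivalent to the assertion that $w$ is bounded near $\widetilde z=0$, and I would prove this in two steps. First, the isolated singularity of $w$ at the origin is removable: since $N_\gamma\ge3$, the point $\{0\}$ has zero $D^{1,2}_\gamma$-capacity (there are cutoffs $\xi_\delta$ vanishing on $\widetilde B_\delta(0)$, equal to $1$ outside $\widetilde B_{2\delta}(0)$, with $\int|\nabla_\gamma\xi_\delta|^2dz\to0$ as $\delta\to0$); testing the punctured-ball weak formulation against $\xi_\delta\varphi$ for $\varphi\in C_0^\infty(\widetilde B_\rho(0))$ and letting $\delta\to0$, the Cauchy--Schwarz error terms $\int w\,\nabla_\gamma\xi_\delta\cdot\nabla_\gamma\varphi\,dz$ and $\int\varphi\,\nabla_\gamma w\cdot\nabla_\gamma\xi_\delta\,dz$ vanish, so $w$ is a weak solution of the critical equation on all of $\widetilde B_\rho(0)$. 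Second, I would upgrade $w\in L^{2_\gamma^*}$ to $w\in L^\infty(\widetilde B_{\rho/2}(0))$ by a Brezis--Kato/Moser iteration adapted to the Grushin operator: test the equation against $\eta^2|w|^{2(\beta-1)}w$ with a local cutoff $\eta$, use the Sobolev embedding \eqref{p18} of Monti et al.\ to gain the integrability factor $2_\gamma^*/2$ at each step, and absorb the exactly critical growth by noting that the part of $|w|^{2_\gamma^*-2}$ supported on $\{|w|>A\}$ has small $L^{N_\gamma/2}$-norm for $A$ large; iteration yields $w\in L^p_{\mathrm{loc}}$ for all $p<\infty$ and then, with the now subcritical coefficient, $w\in L^\infty_{\mathrm{loc}}$ near the origin.

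Combining the two steps there are $C>0$ and $\rho>0$ with $|w(\widetilde z)|\le C$ for $d(\widetilde z,0)<\rho$; translating back via $u(z)=d^{N_\gamma-2}(\widetilde z,0)w(\widetilde z)$ and $d(\widetilde z,0)=d^{-1}(z,0)$ gives $|u(z)|\le C\,d^{-(N_\gamma-2)}(z,0)$ whenever $d(z,0)>1/\rho$, which is exactly \eqref{p15} (and the same regularity makes the pointwise bound meaningful). The main obstacle is the regularity step: the nonlinearity is of exactly critical growth, so the iteration does not close through a crude Hölder estimate and one genuinely needs the truncation argument combined with the Grushin Sobolev inequality \eqref{p18}; the removability of the point singularity is a secondary technical point, resting on $N_\gamma\ge3$, and the algebraic cancellation of the $d(\widetilde z,0)$-powers is routine once the identity $d(z,0)=d^{-1}(\widetilde z,0)$ is recorded.
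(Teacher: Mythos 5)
Your proposal follows the same essential route as the paper: perform the Kelvin transform of Theorem \ref{th2}, observe that $w(\widetilde z)=d^{N_\gamma-2}(z,0)u(z)$ solves the same critical equation on $\R^{N+l}\setminus\{0\}$, and remove the point singularity by a capacity/cutoff argument. The paper's cutoffs $\xi_h$ and the estimates \eqref{52}--\eqref{61} implement exactly the removable-singularity step you describe, and its bound \eqref{57} on $\int_{B_R(0)}|\nabla_\gamma^{(\widetilde z)}w|^2$ corresponds to your claim that $w$ has finite local Grushin energy.

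Where you go further is the final regularity step. The paper's proof stops at \eqref{61}, that is, at the statement that $w$ is a weak solution of the critical equation across the origin; it never explicitly upgrades $w\in L^{2_\gamma^*}_{\mathrm{loc}}\cap D^{1,2}_\gamma$ to $w\in L^\infty$ near $\widetilde z=0$, which is the fact actually needed to transfer back to \eqref{p15}. At critical growth this step is not automatic, and your Brezis--Kato/Moser iteration (testing against $\eta^2|w|^{2(\beta-1)}w$, using the Grushin Sobolev inequality \eqref{p18} to gain the factor $2_\gamma^*/2$, and truncating the critical potential $|w|^{2_\gamma^*-2}$ so that its large part has small $L^{N_\gamma/2}$ norm) is the correct and necessary way to close that gap. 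So your proposal is in fact more complete than the argument written in the paper.

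One small point of divergence: you assert at the outset that the Kelvin change of variables is an isometry of the Grushin Dirichlet energy, so that $w\in D^{1,2}_\gamma$ a priori. The paper does not rely on this. It only verifies the cheaper fact \eqref{51} that $\|w\|_{L^{2_\gamma^*}}\le\|u\|_{L^{2_\gamma^*}}$ (by a pointwise Jacobian estimate), and then derives the local energy bound \eqref{57} directly from the transformed equation by a Caccioppoli-type test with $\xi_h^2w\varphi^2$. The Caccioppoli route is more self-contained; if you prefer the isometry claim you should either prove it or cite Monti--Morbidelli (the paper's reference \cite{zs4}), since unlike the Euclidean case it is not a one-line computation.
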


The main contributions of this paper are summarized in the following three points. First, since there is no symmetric invariance for the $x$ variable in Grushin operator \eqref{i2}, we obtain two Pohozaev identities generated from translations by multiplying the equation \eqref{i1} by $\frac{\partial u}{\partial x_i}$ and $\frac{\partial u}{\partial y_j}$ respectively and then integrate on $D\subset\Omega$. Second, we obtain Pohozaev identity generated from scaling basically due to multiplied by $(z\cdot\nabla_zu)$ in the both side of Dirichlet problem \eqref{ijk4} to overcome the challenge from the asymmetry of the Grushin operators. Third, we calculate the change of Grushin operator by Kelvin transformation in the sense of functions. Furthermore, the applications of Pohozaev identities and the Kelvin transformation to nonlinear Grushin type elliptic equations are given.

The paper in the sequel is organized as follows. In section 2, we prove Theorem \ref{ma1} to obtain two Pohozaev identities generated from translations of \eqref{i1} and prove Corollary \ref{th1} to obtain the location of the concentration point for solution of \eqref{p10} satisfying Definition \ref{zrf4}. In section 3, we prove Theorem \ref{300} to obtain Pohozaev identity generated from scaling of \eqref{ijk4} and prove Corollary \ref{th1.1} to obtain
the nonexistence of nontrivial solutions for the problem \eqref{.ma9} under some assumptions. In section 4, we prove Theorem \ref{th2} to obtain the change of Grushin operator by Kelvin transformation and prove Corollary \ref{th3} to obtain the decay rate of solution at infinity for the problem \eqref{zzzzzzzzzzzzzzz}.

\section{Pohozaev identities generated from translations}

In this section, we prove two Pohozaev identities generated from translations of \eqref{i1} in Theorem \ref{ma1}. Then we determine the location of the concentration point for solution of \eqref{p10} satisfying Definition \ref{zrf4} by such identities in Corollary \ref{th1}.

\begin{proof}[\textbf{Proof of Theorem \ref{ma1}:}]

Multiplying the equation \eqref{i1} by $\frac{\partial u}{\partial x_i}$ and integrating on $D$, we obtain
\begin{equation}\label{ma4}
  \int_D-\Delta_\gamma u \frac{\partial u}{\partial x_i}dz=\int_Df(z,u)\frac{\partial u}{\partial x_i}dz,
\end{equation}
i.e.
\begin{equation}\label{ma5}
  \int_D\left(-\Delta_xu-|x|^{2\gamma}\Delta_yu\right)\frac{\partial u}{\partial x_i}dz=\int_Df(z,u)\frac{\partial u}{\partial x_i}dz.
\end{equation}
We denote the derivative of $F$ with respect to all of $x_i$ by $F_{x_i}(z,u)$,
and denote the derivative of $F$ with respect to $x_i$ in the first variable $z$ by $\frac{\partial F(z,u)}{\partial x_i}$. Using chain rule of derivation, we have
\begin{equation}\label{ma6}
  F_{x_i}(z,u)=\frac{\partial F(z,u)}{\partial x_i}+f(z,u)\frac{\partial u}{\partial x_i}.
\end{equation}
According to \eqref{ma6} and integration by parts, we first look at the RHS of \eqref{ma5}. For the convenience of readers, integration by parts is abbreviated as $I$ and Green first formula is abbreviated as $G$ in our process of calculation.
\begin{equation}\label{ma7}
\begin{aligned}
  &\int_Df(z,u)\frac{\partial u}{\partial x_i}dz
  \\\overset{\eqref{ma6}}=&\int_DF_{x_i}(z,u)-\frac{\partial F(z,u)}{\partial x_i}dz
  \\\overset{I}=&\int_{\partial D}F(z,u)\nu_x^idS-\int_D\frac{\partial F(z,u)}{\partial x_i}dz.
\end{aligned}
\end{equation}
According to Green first formula and integration by parts, then we look at the LHS of \eqref{ma5}.
First term:
\begin{equation}\label{ma8}
\begin{aligned}
  &\int_D-\Delta_xu\frac{\partial u}{\partial x_i}dz
  \\\overset{G}=&\int_D\nabla_x\frac{\partial u}{\partial x_i}\cdot\nabla_xudz-\int_{\partial D}\frac{\partial u}{\partial x_i}\frac{\partial u}{\partial \nu_x}dS
  \\=&\int_D\big(\frac{1}{2}|\nabla_xu|^2\big)_{x_i}dz-\int_{\partial D}\frac{\partial u}{\partial x_i}\frac{\partial u}{\partial \nu_x}dS
  \\\overset{I}=&\frac{1}{2}\int_{\partial D}|\nabla_xu|^2\nu_x^idS-\int_{\partial D}\frac{\partial u}{\partial x_i}\frac{\partial u}{\partial \nu_x}dS.
\end{aligned}
\end{equation}
Second term:
\begin{equation}\label{ma9}
\begin{aligned}
  &\int_D-|x|^{2\gamma}\frac{\partial u}{\partial x_i}\Delta_yudz
  \\\overset{G}=&\int_D\nabla_y\big(|x|^{2\gamma}\frac{\partial u}{\partial x_i}\big)\cdot\nabla_yudz-\int_{\partial D}|x|^{2\gamma}\frac{\partial u}{\partial x_i}\frac{\partial u}{\partial\nu_y}dS
  \\=&\int_D|x|^{2\gamma}\big(\frac{1}{2}|\nabla_yu|^2\big)_{x_i}dz-\int_{\partial D}|x|^{2\gamma}\frac{\partial u}{\partial x_i}\frac{\partial u}{\partial\nu_y}dS
  \\\overset{I}=&-\gamma\int_D|\nabla_yu|^2|x|^{2(\gamma-1)}x_idz+\frac{1}{2}\int_{\partial D}|\nabla_yu|^2|x|^{2\gamma}\nu_x^idS-\int_{\partial D}|x|^{2\gamma}\frac{\partial u}{\partial x_i}\frac{\partial u}{\partial\nu_y}dS.
\end{aligned}
\end{equation}
According to the LHS of \eqref{ma5} is equal to the RHS of \eqref{ma5} and combining with \eqref{ma7}-\eqref{ma9}, we obtain
\begin{equation}\label{ma10}
\begin{aligned}
  &\frac{1}{2}\int_{\partial D}\big(|\nabla_xu|^2+|x|^{2\gamma}|\nabla_yu|^2\big)\nu_x^idS-\int_{\partial D}\big(\frac{\partial u}{\partial \nu_x}+|x|^{2\gamma}\frac{\partial u}{\partial\nu_y}\big)\frac{\partial u}{\partial x_i}dS
  \\-&\gamma\int_D|\nabla_yu|^2|x|^{2(\gamma-1)}x_idz
  =\int_{\partial D}F(z,u)\nu_x^idS-\int_D\frac{\partial F(z,u)}{\partial x_i}dz,
\end{aligned}
\end{equation}
i.e. \eqref{.ma2}. So we have proved \eqref{.ma2} now. Next we will prove \eqref{.ma3} by similar idea.

Multiplying the equation by $\frac{\partial u}{\partial y_j}$ and integrating on $D$, we obtain
\begin{equation}\label{ma11}
  \int_D-\Delta_\gamma u \frac{\partial u}{\partial y_j}dz=\int_Df(z,u)\frac{\partial u}{\partial y_j}dz,
\end{equation}
i.e.
\begin{equation}\label{ma12}
  \int_D\left(-\Delta_xu-|x|^{2\gamma}\Delta_yu\right)\frac{\partial u}{\partial y_j}dz=\int_Df(z,u)\frac{\partial u}{\partial y_j}dz.
\end{equation}
We denote the derivative of $F$ with respect to all of $y_j$ by $F_{y_j}(z,u)$,
and denote the derivative of $F$ with respect to $y_j$ in the first variable $z$ by $\frac{\partial F(z,u)}{\partial y_j}$. Using chain rule of derivation, we have
\begin{equation}\label{ma13}
  F_{y_j}(z,u)=\frac{\partial F(z,u)}{\partial y_j}+f(z,u)\frac{\partial u}{\partial y_j}.
\end{equation}
According to \eqref{ma13} and integration by parts, we first look at the RHS of \eqref{ma12}.
\begin{equation}\label{ma14}
\begin{aligned}
  &\int_Df(z,u)\frac{\partial u}{\partial y_j}dz
  \\\overset{\eqref{ma13}}=&\int_DF_{y_j}(z,u)-\frac{\partial F(z,u)}{\partial y_j}dz
  \\\overset{I}=&\int_{\partial D}F(z,u)\nu_y^jdS-\int_D\frac{\partial F(z,u)}{\partial y_j}dz.
\end{aligned}
\end{equation}
According to Green first formula and integration by parts, then we look at the LHS of \eqref{ma12}.
First term:
\begin{equation}\label{ma15}
\begin{aligned}
  &\int_D-\Delta_xu\frac{\partial u}{\partial y_j}dz
  \\\overset{G}=&\int_D\nabla_x\frac{\partial u}{\partial y_j}\cdot\nabla_xudz-\int_{\partial D}\frac{\partial u}{\partial y_j}\frac{\partial u}{\partial \nu_x}dS
  \\=&\int_D\big(\frac{1}{2}|\nabla_xu|^2\big)_{y_j}dz-\int_{\partial D}\frac{\partial u}{\partial y_j}\frac{\partial u}{\partial \nu_x}dS
  \\\overset{I}=&\frac{1}{2}\int_{\partial D}|\nabla_xu|^2\nu_y^jdS-\int_{\partial D}\frac{\partial u}{\partial y_j}\frac{\partial u}{\partial \nu_x}dS.
\end{aligned}
\end{equation}
Second term:
\begin{equation}\label{ma16}
\begin{aligned}
  &\int_D-|x|^{2\gamma}\frac{\partial u}{\partial y_j}\Delta_yudz
  \\\overset{G}=&\int_D\nabla_y\big(|x|^{2\gamma}\frac{\partial u}{\partial y_j}\big)\cdot\nabla_yudz-\int_{\partial D}|x|^{2\gamma}\frac{\partial u}{\partial y_j}\frac{\partial u}{\partial\nu_y}dS
  \\=&\int_D|x|^{2\gamma}\big(\frac{1}{2}|\nabla_yu|^2\big)_{y_j}dz-\int_{\partial D}|x|^{2\gamma}\frac{\partial u}{\partial y_j}\frac{\partial u}{\partial\nu_y}dS
  \\\overset{I}=&\frac{1}{2}\int_{\partial D}|\nabla_yu|^2|x|^{2\gamma}\nu_y^jdS-\int_{\partial D}|x|^{2\gamma}\frac{\partial u}{\partial y_j}\frac{\partial u}{\partial\nu_y}dS.
\end{aligned}
\end{equation}
According to the LHS of \eqref{ma12} is equal to the RHS of \eqref{ma12} and combining with \eqref{ma14}-\eqref{ma16}, we obtain
\begin{equation}\label{ma17}
\begin{aligned}
  &\frac{1}{2}\int_{\partial D}\big(|\nabla_xu|^2+|x|^{2\gamma}|\nabla_yu|^2\big)\nu_y^jdS-\int_{\partial D}\big(\frac{\partial u}{\partial \nu_x}+|x|^{2\gamma}\frac{\partial u}{\partial\nu_y}\big)\frac{\partial u}{\partial y_j}dS
  \\=&\int_{\partial D}F(z,u)\nu_y^jdS-\int_D\frac{\partial F(z,u)}{\partial y_j}dz,
\end{aligned}
\end{equation}
i.e.\eqref{.ma3}. So we have proved \eqref{.ma3} now.
\end{proof}

\begin{Rem}\label{2}
Note that the equation we concern about is in the form of
\begin{equation}\label{.ma4}
  -\varepsilon^2\Delta_\gamma u+a(z)u=u^{q-1},\;\;~\mbox{in}~\R^{N+l}.
\end{equation}
So if $u$ is the solution of this equation, then $u$ satisfies
\begin{equation}\label{.ma50}
\begin{aligned}
&\frac{1}{2}\varepsilon^2\int_{\partial D}|\nabla_\gamma u|^2\nu_x^idS-\varepsilon^2\int_{\partial D}\left(\frac{\partial u}{\partial \nu_x}+|x|^{2\gamma}\frac{\partial u}{\partial \nu_y}\right)\frac{\partial u}{\partial x_i}dS-\varepsilon^2\gamma\int_D|\nabla_y u|^2|x|^{2(\gamma-1)}x_idz
\\=&\frac{1}{q}\int_{\partial D}u^q\nu_x^idS-\frac{1}{2}\int_{\partial D}a(z)u^2\nu_x^idS+\frac{1}{2}\int_D\frac{\partial a(z)}{\partial x_i}u^2dz,\;\;\;\;~\mbox{i=1,\ldots,N},
\end{aligned}
\end{equation}
and
\begin{equation}\label{.ma60}
\begin{aligned}
&\frac{1}{2}\varepsilon^2\int_{\partial D}|\nabla_\gamma u|^2\nu_y^jdS-\varepsilon^2\int_{\partial D}\left(\frac{\partial u}{\partial \nu_x}+|x|^{2\gamma}\frac{\partial u}{\partial \nu_y}\right)\frac{\partial u}{\partial y_j}dS
\\=&\frac{1}{q}\int_{\partial D}u^q\nu_y^jdS-\frac{1}{2}\int_{\partial D}a(z)u^2\nu_y^jdS+\frac{1}{2}\int_D\frac{\partial a(z)}{\partial y_j}u^2dz,\;\;\;\;~\mbox{j=1,\ldots,l}.
\end{aligned}
\end{equation}
We just take
\begin{equation*}
  F(z,u)=\frac{1}{\varepsilon^2}\left(\frac{1}{q}u^q-\frac{1}{2}a(z)u^2\right)
\end{equation*}
insert \eqref{.ma2} and \eqref{.ma3} respectively is alright.
\end{Rem}

From now on, we locate the position of the concentration point $z_0$ for solution of \eqref{p10} satisfying Definition \ref{zrf4} by using Pohozaev identities \eqref{.ma50} and \eqref{.ma60}. Before that, we should estimate the behavior of solution and several surface integrals away from the explosion point $z_\varepsilon$ by using maximum principles for weak solutions of degenerate equations in \cite{DDM10} and $L^2$ estimate for Grushin operator in \cite{Lp12} respectively.

\begin{Lem}\label{jc2}(cf. \cite{DDM10})
Assume that a general linear partial differential operators of the form
\begin{equation}\label{xj4}
  L:=a_{ij}(x)D_{ij}+b_i(x)D_i+c(x)
\end{equation}
acting on a domain $\Omega\subset\R^N$ satisfies the following hypotheses:

\textup{$(1)$} $a_{ij}=a_{ji}\in C^0(\overline{\Omega},\R)$ and $D_ja_{ij}\in L^\infty(\Omega,\R)$, $i,j=1,\cdots,N$.

\textup{$(2)$} $L$ has non-negative characteristic form.

\textup{$(3)$} The degeneracy set has Lebesgue measure zero.

\textup{$(4)$} L admits a uniformly elliptic direction.

\textup{$(5)$} $c(x)\in L^\infty(\Omega,\R)$.

\textup{$(6)$} $(\sqrt{A(x)})^{-1}\widetilde{b}(x)\in L^\infty(\Omega,\R^N)$, where $A(x)=[(a_{ij})]$, and $\widetilde{b}(x):=(\widetilde{b_1}(x),\cdots,\widetilde{b_N}(x))$ is defined by $\widetilde{b_i}(x)=D_ja_{ij}-b_i(x)$ for $i=1,\cdots,N$.

\textup{$(7)$} $1-\|(\sqrt{A(x)})^{-1}\widetilde{b}(x)\|_{L^\infty(\Omega)}C_p-C_p^2\sup_\Omega c(x)>0$, where $C_p>0$ denotes the best constant in the Poincar\'{e} inequality. \\
Then if $u\in H^{1,A}(\Omega):=\{u\in L^2(\Omega)\;|\;\sqrt{A}\nabla u\in L^2(\Omega)\}$ is a weak solution of $Lu\geq0$ in $\Omega$ and if $u\leq0$ on $\partial\Omega$, one has $u\leq0$ almost everywhere in $\Omega$.
\end{Lem}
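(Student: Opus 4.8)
The plan is to run the classical comparison argument — testing the inequality against the positive part of $u$ — adapted to the weighted, possibly degenerate, setting; the hypotheses $(1)$--$(7)$ are arranged precisely so that this scheme closes.

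First I would put $L$ in divergence form. Using $a_{ij}=a_{ji}$ and $D_ja_{ij}\in L^\infty(\Omega)$ from hypothesis $(1)$, write $a_{ij}D_{ij}u=D_j(a_{ij}D_iu)-(D_ja_{ij})D_iu$, so that $Lu=D_j(a_{ij}D_iu)-\widetilde b_iD_iu+c\,u$ with $\widetilde b_i=D_ja_{ij}-b_i$ as in hypothesis $(6)$. Accordingly, $u$ being a weak solution of $Lu\ge0$ means
\[
-\int_\Omega a_{ij}D_iu\,D_j\varphi\,dx-\int_\Omega \widetilde b_iD_iu\,\varphi\,dx+\int_\Omega c\,u\,\varphi\,dx\ \ge\ 0
\]
for every $\varphi$ in the $H^{1,A}$-closure of $C_0^\infty(\Omega)$ with $\varphi\ge0$. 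Since $u\le0$ on $\partial\Omega$ and $u\in H^{1,A}(\Omega)$, the truncation $u^+=\max\{u,0\}$ belongs to that space and satisfies $\sqrt A\,\nabla u^+=(\sqrt A\,\nabla u)\,\chi_{\{u>0\}}$; I would take $\varphi=u^+$. Using that $a_{ij}D_iu\,D_ju=|\sqrt A\,\nabla u|^2\ge0$ on $\{u>0\}$ by the non-negativity of the characteristic form (hypothesis $(2)$), this yields
\[
\int_{\{u>0\}}|\sqrt A\,\nabla u|^2\,dx\ \le\ -\int_{\{u>0\}}\widetilde b_iD_iu\;u\,dx+\int_{\{u>0\}}c\,u^2\,dx .
\]

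Next I would estimate the two terms on the right. Hypothesis $(3)$ makes $A(x)$ invertible for a.e.\ $x$, hence $\widetilde b_iD_iu=\big((\sqrt A)^{-1}\widetilde b\big)\cdot\big(\sqrt A\,\nabla u\big)$ a.e., and Cauchy--Schwarz together with hypothesis $(6)$ gives
\[
\Big|\int_{\{u>0\}}\widetilde b_iD_iu\;u\,dx\Big|\ \le\ \|(\sqrt A)^{-1}\widetilde b\|_{L^\infty(\Omega)}\,\|\sqrt A\,\nabla u^+\|_{L^2(\Omega)}\,\|u^+\|_{L^2(\Omega)},
\]
while $\int_{\{u>0\}}c\,u^2\,dx\le(\sup_\Omega c)\,\|u^+\|_{L^2(\Omega)}^2$ by hypothesis $(5)$. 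The uniformly elliptic direction of hypothesis $(4)$ furnishes the weighted Poincar\'e inequality $\|v\|_{L^2(\Omega)}\le C_p\|\sqrt A\,\nabla v\|_{L^2(\Omega)}$ (with $C_p$ the constant entering hypothesis $(7)$) for $v$ in the closure space. Writing $X=\|\sqrt A\,\nabla u^+\|_{L^2(\Omega)}$ and combining the three displays,
\[
X^2\ \le\ \Big(\|(\sqrt A)^{-1}\widetilde b\|_{L^\infty(\Omega)}C_p+C_p^2\sup_\Omega c\Big)X^2,
\]
i.e.\ $\big(1-\|(\sqrt A)^{-1}\widetilde b\|_{L^\infty(\Omega)}C_p-C_p^2\sup_\Omega c\big)X^2\le0$. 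By hypothesis $(7)$ the bracket is strictly positive, so $X=0$; then the Poincar\'e inequality forces $\|u^+\|_{L^2(\Omega)}=0$, that is $u\le0$ almost everywhere in $\Omega$.

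The main obstacle is not the algebra above but the functional-analytic bookkeeping in the degenerate setting: justifying carefully that $u^+$ lies in the $H^{1,A}$-closure of $C_0^\infty(\Omega)$ with the correct weighted gradient $\sqrt A\,\nabla u^+=(\sqrt A\,\nabla u)\chi_{\{u>0\}}$ (a chain-rule/approximation argument in the weighted Sobolev space, using $D_ja_{ij}\in L^\infty$ and the structure of $A$), and establishing the weighted Poincar\'e inequality with a quantitatively identified constant $C_p$. These are exactly the places where hypotheses $(3)$ and $(4)$ are indispensable; once they are in hand, the comparison computation is routine. Full details are in \cite{DDM10}.
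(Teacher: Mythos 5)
The paper does not prove this lemma; it states it with a pointer to \cite{DDM10} (Monticelli--Payne), so there is no in-paper proof to compare against. Your reconstruction is the standard energy/comparison argument and is essentially what the cited reference does: pass to divergence form, test against $u^+$, bound the lower-order terms via Cauchy--Schwarz, hypotheses (5)--(6), and the weighted Poincar\'e inequality coming from (4), and use hypothesis (7) to absorb. You also correctly identify that the genuine content is the functional-analytic bookkeeping (truncation in $H^{1,A}$ and the weighted Poincar\'e inequality), which you rightly defer to the reference.

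One small point of care: the passage from
$X^2 \le \|(\sqrt A)^{-1}\widetilde b\|_\infty C_p X^2 + (\sup_\Omega c)\,\|u^+\|_{L^2}^2$
to
$X^2 \le \bigl(\|(\sqrt A)^{-1}\widetilde b\|_\infty C_p + C_p^2\sup_\Omega c\bigr)X^2$
uses $\|u^+\|_{L^2}^2 \le C_p^2 X^2$, which flips the inequality if $\sup_\Omega c<0$. The cleanest fix is to set $t=\|u^+\|_{L^2}/X\in[0,C_p]$ (assuming $X>0$) and note that $g(t)=1-\|(\sqrt A)^{-1}\widetilde b\|_\infty\,t-(\sup_\Omega c)\,t^2$ satisfies $g(0)=1>0$ and $g(C_p)>0$ by hypothesis (7), and is either concave (if $\sup c\ge0$) or convex (if $\sup c<0$), hence strictly positive on all of $[0,C_p]$; this contradicts $g(t)\le0$, forcing $X=0$ and then $u^+=0$ by Poincar\'e. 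With that adjustment the sketch is sound.
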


\begin{Rem}
For $L=\Delta_x+|x|^{2\gamma}\Delta_y$ we study, where $(x,y)\in\R^{N+l}$, $\gamma>0$, it satisfies \textup{$(1)$}-\textup{$(6)$}, which can be referred to in \cite{xj}. So if \textup{$(7)$} satisfied in corresponding region, we can apply Lemma \ref{jc2} for $u\in H^{1,2}_\gamma(\R^{N+l})$.
\end{Rem}
\begin{Lem}\label{jc1}(cf. \cite{Lp12})
Assume that $\phi:\R^N\rightarrow[0,+\infty)$ belongs to the reverse H\"{o}lder class $B_2(\R^N)\cap B_N(\R^N)$. $Lu=f$, where $L=\Delta_x+\phi(x)\Delta_y$, $(x,y)\in\R^{N+M}$, $f\in L^2(\R^{N+M})$, $u\in D(L):=\{u\in L^2(\R^{N+M})\;|\;\nabla_xu, D_{x_ix_j}u \in L^2(\R^{N+M}), \phi^{\frac{1}{2}}\nabla_yu, \phi D_{y_hy_k}u, \phi^{\frac{1}{2}}D_{x_iy_h}u \in L^2(\R^{N+M})\}$, for every $i,j=1,\cdots,N$, $h,k=1,\cdots,M$. Then one has
\begin{equation}\label{xj1}
  \|D_{x_ix_j}u\|_2+\|\phi D_{y_hy_k}u\|_2\leq C\|f\|_2,\;\;u\in D(L).
\end{equation}
And
\begin{equation}\label{xj2}
  \|\nabla_xu\|_2+\|\phi^{\frac{1}{2}}\nabla_yu\|_2\leq C(\|f\|_2+\|u\|_2),\;\;u\in D(L).
\end{equation}
Moreover, for $i=1,,N$, $h=1,\cdots,M$, one has
\begin{equation}\label{xj3}
  \|\phi^{\frac{1}{2}}D_{x_iy_h}u\|_2\leq C\|f\|_2,\;\;u\in D(L).
\end{equation}
\end{Lem}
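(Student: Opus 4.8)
This lemma is the $p=2$ case of the $L^p$‑regularity theory for Baouendi--Grushin operators established in \cite{Lp12}, so in the body of the paper it is enough to quote that reference; for completeness I indicate the argument I would use. The guiding idea is to eliminate the $y$‑variable by a partial Fourier transform. Writing $\widehat{u}(x,\eta)=\int_{\R^M}u(x,y)e^{-iy\cdot\eta}\,dy$, the equation $Lu=f$ with $L=\Delta_x+\phi(x)\Delta_y$ becomes, for each dual frequency $\eta\in\R^M$,
$$\big(-\Delta_x+\phi(x)|\eta|^2\big)\widehat{u}(x,\eta)=-\widehat{f}(x,\eta),$$
so that $L$ is fibered over $\eta$ into the family of Schr\"odinger operators $\mathcal{L}_\eta:=-\Delta_x+V_\eta$ on $\R^N$, with potential $V_\eta(x)=|\eta|^2\phi(x)$. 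Because multiplying a nonnegative weight by a positive constant leaves its reverse H\"older constants unchanged, $V_\eta\in B_2(\R^N)\cap B_N(\R^N)$ with constants independent of $\eta$.

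I would then invoke the Calder\'on--Zygmund estimates of Shen for Schr\"odinger operators with reverse H\"older potentials: for $V\in B_2\cap B_N$ one has $\|D^2_x v\|_2\le C\|\mathcal{L}v\|_2$, the form identity $\|\nabla_x v\|_2^2+\|V^{1/2}v\|_2^2=\langle\mathcal{L}v,v\rangle$, and the mixed bound $\|V^{1/2}\nabla_x v\|_2\le C\|\mathcal{L}v\|_2$, all with $C$ depending only on $N$ and the $B_2\cap B_N$ constants of $V$ — hence uniformly in $\eta$. Applying these to $v=\widehat{u}(\cdot,\eta)$ and integrating in $\eta$, Plancherel's theorem in $y$ turns $\|D^2_x v\|_{L^2_x}$ into $\|D_{x_ix_j}u\|_2\le C\|f\|_2$; the $\phi D_{y_hy_k}u$ bound of \eqref{xj1} then follows from the equation, since $\phi|\eta|^2\widehat u=-\widehat{Lu}+\Delta_x\widehat u$ and $|\eta_h\eta_k|\le|\eta|^2$; the mixed estimate \eqref{xj3} comes from $\|V_\eta^{1/2}\nabla_x v\|_2$ together with $|\eta_h|\le|\eta|$; and \eqref{xj2} comes from $\|\nabla_x v\|_2$ and $\|V_\eta^{1/2}v\|_2$ controlled by $\langle\mathcal{L}_\eta v,v\rangle\le\|\mathcal{L}_\eta v\|_2\|v\|_2$. (An alternative, more hands‑on route would be to prove Gaussian‑type bounds for the Grushin heat kernel $e^{tL}$ and feed them into the usual singular‑integral machinery, but the Fourier reduction is cleaner.)

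The technical heart — and the step I expect to cost the most — is the \emph{uniformity in the frequency $\eta$}: one must check that the Fefferman--Phong auxiliary function attached to $V_\eta$ behaves so that Shen's constants depend only on $N$ and the $B_2\cap B_N$ constants of $\phi$, not on $|\eta|$. This is also the place where the hypothesis $\phi\in B_N$, rather than merely $\phi\in B_{N/2}$, is essential: $B_{N/2}$ already yields the pure second‑order bound, but the first‑order and mixed estimates \eqref{xj2}--\eqref{xj3} genuinely require the $B_N$ condition. A minor point is to justify the Fourier manipulations and the integration in $\eta$ on the stated domain $D(L)$; this is handled by first proving the estimates for $u\in C_0^\infty(\R^{N+M})$ and then using that $C_0^\infty$ is a core for $L$.
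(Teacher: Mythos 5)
The paper does not prove this lemma — it is simply quoted from \cite{Lp12}, exactly as you observe, so a citation is what the paper supplies and what is needed. Your Fourier‑reduction sketch (reduce $L$ to the frequency‑parametrized Schr\"odinger family $-\Delta_x+|\eta|^2\phi$, invoke Shen's reverse‑H\"older $L^2$ estimates for Schr\"odinger operators with the key observation that scaling $\phi$ by $|\eta|^2$ leaves the $B_2\cap B_N$ constants unchanged, and recover the $y$‑derivatives from Plancherel and the equation) is a correct account of the $p=2$ argument in that reference and isolates the right uniformity‑in‑$\eta$ issue, as well as the density step needed to pass from $C_0^\infty$ to $D(L)$.
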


\begin{Rem}
When $\phi(x)=|x|^\alpha$, $\alpha\geq0$, $|x|^\alpha\in B_\infty(\R^N)\subset B_p(\R^N)$, for $1<p<\infty$. So we can apply Lemma \ref{jc1} in $H^{2,2}_\gamma(\R^{N+l})$ and $W^{2,2}(\R^{N+l})$ and apply \eqref{xj2} in $H^{1,2}_{\gamma-\frac{1}{2}}(\R^{N+l})$ for $L=\Delta_x+|x|^{2\gamma}\Delta_y$ when $\gamma>\frac{1}{2}$.
\end{Rem}

\begin{Prop}\label{1010}
Assume that $\gamma>\frac{1}{2}$ and $u_\e\in H^{1,2}_{\gamma-\frac{1}{2}}(\R^{N+l})\cap H^{2,2}_\gamma(\R^{N+l})\cap W^{2,2}(\R^{N+l})\subset H^{1,2}_\gamma(\R^{N+l})$ is the solution of \eqref{p10} satisfying Definition \ref{zrf4}. Then for any $\alpha\in(0,\displaystyle\inf_{z\in \R^{N+l}}a(z))$, there are constants $\theta>0$ and $C>0$, such that
\begin{equation}\label{..4}
  u_\e(z)\leq Ce^{-\frac{\theta {d(z,z_\e)}}{\varepsilon}},\;\;\forall z\in\R^{N+l},
\end{equation}
where $C=(M+1)e^{\theta R}$.
Moreover, if we denote
\begin{eqnarray*}
% \nonumber to remove numbering (before each equation)
  J_1 &=& \int_{\partial\widetilde{B}_\delta(z_\varepsilon)}|\nabla_\gamma u_\varepsilon|^2\nu_x^idS, \\
  J_2 &=& \int_{\partial\widetilde{B}_\delta(z_\varepsilon)}\left(\frac{\partial{u_\e}}{\partial \nu_x}+|x|^{2\gamma}\frac{\partial{u_\e}}{\partial \nu_y}\right)\frac{\partial{u_\e}}{\partial x_i}dS, \\
  J_3 &=& \int_{\partial\widetilde{B}_\delta(z_\e)}|\nabla_\gamma u_\e|^2\nu_y^jdS, \\
  J_4 &=& \int_{\partial\widetilde{B}_\delta(z_\e)}\left(\frac{\partial {u_\e}}{\partial \nu_x}+|x|^{2\gamma}\frac{\partial {u_\e}}{\partial \nu_y}\right)\frac{\partial {u_\e}}{\partial y_j}dS,
\end{eqnarray*}
then
\begin{equation}\label{yph23}
  J_i\leq\widetilde{C}\int_{\widetilde{B}_\delta(z_\varepsilon)}e^{-\frac{\theta {d(z,z_\e)}}{\varepsilon}}dz,\;\;i=1,\cdots,4,
\end{equation}
where $\delta>0$ and $\widetilde{C}>0$ is a certain constant.
\end{Prop}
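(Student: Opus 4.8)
The plan is to prove the pointwise bound \eqref{..4} by comparing $u_\e$ with an exponential barrier in the exterior region $\R^{N+l}\setminus\widetilde{B}_{R\e}(z_\e)$, and then to deduce the boundary estimates \eqref{yph23} from \eqref{..4} together with an interior gradient estimate.

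First I would convert the equation into a differential inequality. Fix $\alpha\in(0,\inf_{z}a(z))$; since $q>2$, pick $\tau>0$ so small that $\tau^{q-2}\le\inf_{z}a(z)-\alpha$ and let $R=R(\tau)$ be the radius supplied by Definition \ref{zrf4}(ii). For $z\notin\widetilde{B}_{R\e}(z_\e)$ one then has $0\le u_\e(z)\le\tau$, so $u_\e^{q-1}=u_\e^{q-2}u_\e\le(a(z)-\alpha)u_\e$, and the equation yields $\e^2\Delta_\gamma u_\e-\alpha u_\e\ge0$ weakly in $\R^{N+l}\setminus\widetilde{B}_{R\e}(z_\e)$. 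Next take the barrier $W(z)=C\,e^{-\theta d(z,z_\e)/\e}$ with $C=(M+1)e^{\theta R}$ and $\theta>0$ to be chosen. Writing $W=h(d(\cdot,z_\e))$ with $h(t)=Ce^{-\theta t/\e}$ and using $\Delta_\gamma(h\circ d)=h''(d)|\nabla_\gamma d|^2+h'(d)\Delta_\gamma d$, one gets $\e^2\Delta_\gamma W-\alpha W=W\bigl(\theta^2|\nabla_\gamma d|^2-\e\theta\,\Delta_\gamma d-\alpha\bigr)$. A direct computation from Definition \ref{zrf1} shows that, away from $z_\e$, $|\nabla_\gamma d(\cdot,z_\e)|^2$ is bounded and $|\Delta_\gamma d(\cdot,z_\e)|\lesssim|\nabla_\gamma d|^2/d(\cdot,z_\e)$ — indeed, for $z_\e=0$ the fact that $d^{2-N_\gamma}$ is $\Delta_\gamma$-harmonic gives the exact identity $\Delta_\gamma d=(N_\gamma-1)|\nabla_\gamma d|^2/d$. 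Since $d(\cdot,z_\e)\ge R\e$ on the exterior region, $\e|\Delta_\gamma d|$ is bounded there, so choosing $\theta$ small (in terms of $\alpha$, $R$ and these bounds) makes $\theta^2|\nabla_\gamma d|^2-\e\theta\Delta_\gamma d\le\alpha$, hence $\e^2\Delta_\gamma W-\alpha W\le0$ in $\R^{N+l}\setminus\widetilde{B}_{R\e}(z_\e)$. On $\partial\widetilde{B}_{R\e}(z_\e)$ we have $d=R\e$, so $W=Ce^{-\theta R}=M+1>M\ge u_\e$. Therefore $v:=u_\e-W$ satisfies $\e^2\Delta_\gamma v-\alpha v\ge0$ weakly in the exterior region with $v\le0$ on $\partial\widetilde{B}_{R\e}(z_\e)$; testing this inequality against $v^+\chi_R^2$ with a cutoff $\chi_R$ ($\chi_R=1$ on $\widetilde{B}_R(z_\e)$, supported in $\widetilde{B}_{2R}(z_\e)$, $|\nabla_\gamma\chi_R|\le C/R$) and using $0\le v^+\le u_\e\in L^2$ forces, after letting $R\to\infty$, $v^+\equiv0$, i.e. $u_\e\le W$ outside $\widetilde{B}_{R\e}(z_\e)$. (One may instead run the maximum principle Lemma \ref{jc2} on exhausting annuli $\widetilde{B}_n(z_\e)\setminus\widetilde{B}_{R\e}(z_\e)$, whose hypothesis $(7)$ is automatic because the zeroth-order coefficient $-\alpha/\e^2<0$.) Inside $\widetilde{B}_{R\e}(z_\e)$ the estimate is trivial, since $d<R\e$ forces $W>M+1>M\ge u_\e$. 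This proves \eqref{..4}.

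For \eqref{yph23}, each integrand of $J_i$ is bounded by $|\nabla_\gamma u_\e|^2$ on $\partial\widetilde{B}_\delta(z_\e)$, so I would first upgrade \eqref{..4} to a gradient bound. Rescaling the equation at $z_\e$ to the Grushin scale $\e$ produces a problem with bounded coefficients, uniformly elliptic off $\{x=0\}$, to which interior gradient estimates apply, while near $\{x=0\}$ the $L^2$ estimates of Lemma \ref{jc1} together with Moser iteration do the same job; in either case $|\nabla_\gamma u_\e(z)|\le\frac{C}{\e}\sup_{\widetilde{B}_{c\e}(z)}u_\e\le\frac{C}{\e}e^{-\theta'd(z,z_\e)/\e}$ for suitable $c,\theta'>0$. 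On $\partial\widetilde{B}_\delta(z_\e)$, where $d=\delta$, this gives $J_i\le C\e^{-2}e^{-2\theta'\delta/\e}\,|\partial\widetilde{B}_\delta(z_\e)|$, which is exponentially small in $\e$, whereas $\int_{\widetilde{B}_\delta(z_\e)}e^{-\theta d(z,z_\e)/\e}\,dz\ge\int_{\widetilde{B}_\e(z_\e)}e^{-\theta d/\e}\,dz\gtrsim\e^{N_\gamma}$. Since an exponentially small quantity is dominated by $\e^{N_\gamma}$ once $\e$ is small, \eqref{yph23} follows (shrinking $\theta$ so that $2\theta'\ge\theta$ if necessary).

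The main obstacle is the maximum principle on the unbounded exterior region: Lemma \ref{jc2} presupposes a finite Poincar\'e constant, so the comparison must be run either by the energy argument above (which uses only $u_\e\in L^2$) or by exhaustion with a correction that vanishes in the limit. Two further technical points require care: the explicit bounds on $|\nabla_\gamma d(\cdot,z_\e)|$ and $\Delta_\gamma d(\cdot,z_\e)$ in the presence of the degeneracy on $\{x=0\}$ (and, when $x_\e\neq0$, the absence of $x$-translation invariance of $\Delta_\gamma$), and the pointwise gradient decay used for \eqref{yph23}, which needs interior regularity for Grushin operators valid uniformly up to the degeneracy set. Once these are in hand, the barrier inequality and the reduction of the surface integrals to the volume integral are routine.
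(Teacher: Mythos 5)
Your barrier argument for the pointwise decay \eqref{..4} is essentially the paper's: rewrite the equation as $\e^2\Delta_\gamma u_\e-(a-u_\e^{q-2})u_\e=0$, use Definition~\ref{zrf4}(ii) to get $a-u_\e^{q-2}\ge\alpha$ outside $\widetilde{B}_{R\e}(z_\e)$, compare with $W=(M+1)e^{\theta R}e^{-\theta d(z,z_\e)/\e}$, and close with a maximum principle. You are right to be uneasy about invoking Lemma~\ref{jc2} verbatim on the unbounded exterior region: hypothesis~(7) quotes a Poincar\'e constant, which is not directly available on $\R^{N+l}\setminus\widetilde{B}_{R\e}(z_\e)$, and the paper simply asserts it holds without comment. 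Your proposed fix (test the inequality for $v=u_\e-W$ against $v^+\chi_R^2$ and let $R\to\infty$, or exhaust by annuli) is a clean way to make this rigorous and a genuine improvement in care over what is written.

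For the surface integrals $J_i$, however, there is a real gap in your route. You want to upgrade \eqref{..4} to a pointwise bound $|\nabla_\gamma u_\e(z)|\lesssim\e^{-1}e^{-\theta' d(z,z_\e)/\e}$ and then bound each $J_i$ by the surface measure times an exponentially small factor, comparing with the $\gtrsim\e^{N_\gamma}$ lower bound for the volume integral. The final comparison is fine, but the pointwise gradient estimate is not. Away from $\{x=0\}$ one can rescale and use uniformly elliptic interior estimates; but near $\{x=0\}$ (which $\partial\widetilde{B}_\delta(z_\e)$ does meet) ``Moser iteration'' only gives $L^\infty$ control of $u_\e$, not of $\nabla_\gamma u_\e$. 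One would need subelliptic Schauder or $C^{1,\alpha}_\gamma$ estimates uniformly up to the degeneracy set, possibly also handling the loss of $x$-translation invariance when $x_\e\neq0$; none of this is provided by the two lemmas the paper cites, and it is not routine.

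The paper avoids the gradient estimate entirely. It applies the divergence theorem to convert each $J_i$ into a solid integral over $\widetilde{B}_\delta(z_\e)$ (e.g. $J_1=\int_{\widetilde{B}_\delta}\partial_{x_i}|\nabla_\gamma u_\e|^2\,dz$), expands, uses Young's inequality to decouple the products, and then bounds each resulting $L^2$ norm of a first or second derivative by $\|\Delta_\gamma u_\e\|_{L^2}$ via the Baouendi--Grushin $L^2$ estimates of Lemma~\ref{jc1} (this is exactly why the hypothesis $u_\e\in H^{1,2}_{\gamma-1/2}\cap H^{2,2}_\gamma\cap W^{2,2}$ is present). Since $\Delta_\gamma u_\e=\e^{-2}(a u_\e-u_\e^{q-1})$, the right-hand side is controlled by \eqref{..4}, giving \eqref{yph23} without ever leaving the $L^2$ level. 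You should replace your pointwise-gradient step by this divergence-theorem/$L^2$ argument; as it stands your proof of \eqref{yph23} rests on an estimate you have not established and that does not follow from the tools in play.
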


\begin{proof}
Denote $a_m=\displaystyle\inf_{z\in \R^{N+l}}a(z)$. Write
\begin{equation*}
  -\varepsilon^2\Delta_\gamma u_\e+ a(z)u_\e=u_\e^{q-1}
\end{equation*}
as
\begin{equation*}
  \varepsilon^2\Delta_\gamma u_\e-\left(a(z)-u_\e^{q-2}\right)u_\e=0.
\end{equation*}
By \noindent \textup{(ii)} in Definition \ref{zrf4}, for any $\tau>0$, there exists $R>>1$, such that for $\forall z\in \R^{N+l}\backslash \widetilde{B}_{R\e}(z_\e)$,where $z_\e=(x_\e,y_\e)$, we have~$u_\varepsilon\leq\tau$.
So for any $\alpha\in(0,a_m)$, there exists $R>0$, such that for $\forall z\in \R^{N+l}\backslash \widetilde{B}_{R\e}(z_\e)$, we have
\begin{equation*}
  a(z)-u_\varepsilon^{q-2}\geq a_m-\tau^{q-2}\geq\alpha>0.
\end{equation*}
Thus in $\R^{N+l}\backslash \widetilde{B}_{R\e}(z_\e)$, we have
\begin{equation*}
  \varepsilon^2\Delta_\gamma u_\varepsilon-\alpha u_\varepsilon\geq0.
\end{equation*}
Let
\begin{equation}\label{..6}
  L_\varepsilon u=\varepsilon^2\Delta_\gamma u-\alpha u.
\end{equation}
Then we can verify $L_\varepsilon$ satisfies the item \textup{$(7)$} in Lemma \ref{jc2}. So we can apply maximum principles for weak solutions in $\R^{N+l}\backslash \widetilde{B}_{R\e}(z_\e)$.
On the one hand
\begin{equation}\label{..7}
  L_\varepsilon u_\varepsilon\geq0.
\end{equation}
On the other hand, if $\varepsilon$ small enough,
\begin{equation}\label{..8}
\begin{aligned}
  &L_\varepsilon e^{-\frac{\theta {d(z,z_\e)}}{\varepsilon}}
  \\=&e^{-\frac{\theta {d(z,z_\e)}}{\varepsilon}}\left(\theta^2\frac{1}{(1+\gamma)^2}|x|^{2\gamma}d^{-2\gamma}(z,z_\varepsilon)
  -\theta\varepsilon\frac{1}{(1+\gamma)^2}|x|^{2\gamma}d^{-1-2\gamma}(z,z_\varepsilon)(N_\gamma-1)-\alpha\right)
  \leq0.
\end{aligned}
\end{equation}
By \noindent \textup{(iii)} in Definition \ref{zrf4}, it holds $u_\varepsilon\leq M$ for some $M$. Let
\begin{equation*}
  \omega_\varepsilon=u_\varepsilon-(M+1)e^{\theta R}e^{-\frac{\theta {d(z,z_\e)}}{\varepsilon}}.
\end{equation*}
Then it follows from \eqref{..7} and \eqref{..8} that in $\R^{N+l}\backslash \widetilde{B}_{R\e}(z_\e)$,
\begin{equation}\label{..9}
  L_\varepsilon\omega_\varepsilon=L_\varepsilon u_\varepsilon-(M+1)e^{\theta R}L_\varepsilon e^{-\frac{\theta {d(z,z_\e)}}{\varepsilon}}\geq0.
\end{equation}
But on $\partial\widetilde{B}_{R\e}(z_\e)$,
\begin{equation*}
  \omega_\varepsilon=u_\varepsilon-(M+1)\leq0.
\end{equation*}
Hence, by Lemma \ref{jc2}, we have
\begin{equation*}
  \omega_\varepsilon\leq0, \;\; ~\mbox{in}~\R^{N+l}\backslash \widetilde{B}_{R\e}(z_\e),
\end{equation*}
i.e.
\begin{equation*}
  u_\varepsilon\leq Ce^{-\frac{\theta {d(z,z_\e)}}{\varepsilon}},
\end{equation*}
where~$C=(M+1)e^{\theta R}$.~
While in $\widetilde{B}_{R\e}(z_\e)$,
\begin{equation*}
  (M+1)e^{\theta R}e^{-\frac{\theta {d(z,z_\e)}}{\varepsilon}}\geq M+1>u_\varepsilon.
\end{equation*}
So we complete the proof of \eqref{..4}.

Next we prove another part of proposition. By using \eqref{..4} and $L^2$ estimate to \eqref{p10}, we estimate the following surface integrals on $\partial\widetilde{B}_\delta(z_\varepsilon)$, where $\delta>0$.

For $J_1$,
\begin{equation}\label{yph1}
\begin{aligned}
&\int_{\partial\widetilde{B}_\delta(z_\varepsilon)}|\nabla_\gamma u_\varepsilon|^2\nu_x^idS
\\=&\int_{\widetilde{B}_\delta(z_\varepsilon)}\frac{\partial}{\partial x_i}(|\nabla_\gamma u_\varepsilon|^2)dz
\\=&\sum_{j=1}^N\int_{\widetilde{B}_\delta(z_\varepsilon)}2{u_\varepsilon}_{x_j}{u_\varepsilon}_{x_jx_i}dz
+\sum_{h=1}^l\int_{\widetilde{B}_\delta(z_\varepsilon)}2\gamma|x|^{2\gamma-2}x_i{u_\varepsilon}_{y_h}^2
+2|x|^{2\gamma}{u_\varepsilon}_{y_h}{u_\varepsilon}_{y_hx_i}dz
\\\leq&\sum_{j=1}^N\left(\int_{\widetilde{B}_\delta(z_\varepsilon)}{u_\varepsilon}_{x_j}^2
+{u_\varepsilon}_{x_jx_i}^2dz\right)
\\+&\sum_{h=1}^l\left(\int_{\widetilde{B}_\delta(z_\varepsilon)}2\gamma|x|^{2\gamma-1}{u_\varepsilon}_{y_h}^2dz
+\int_{\widetilde{B}_\delta(z_\varepsilon)}|x|^{2\gamma}{u_\varepsilon}_{y_h}^2dz
+\int_{\widetilde{B}_\delta(z_\varepsilon)}|x|^{2\gamma}{u_\varepsilon}_{y_hx_i}^2dz\right),
\end{aligned}
\end{equation}
in which, according to \eqref{xj2},
\begin{equation}\label{yph2}
\begin{aligned}
  &\int_{\widetilde{B}_\delta(z_\varepsilon)}|\nabla_xu_\varepsilon|^2dz
  +\int_{\widetilde{B}_{\frac{\delta}{2}}(z')}|x|^{2\gamma}|\nabla_yu_\varepsilon|^2dz
  \\\leq&C\left(\frac{1}{\varepsilon^2}\|a(z)u_\varepsilon-u_\varepsilon^{q-1}\|_{L^2(\widetilde{B}_\delta(z_\e))}
  +\|u_\varepsilon\|_{L^2(\widetilde{B}_\delta(z_\e))}\right)^2
  \\\leq&C\left(\frac{1}{\varepsilon^2}a_1\left(\int_{\widetilde{B}_\delta(z_\varepsilon)}e^{-\frac{2\theta {d(z,z_\e)}}{\varepsilon}}dz\right)^\frac{1}{2}+\frac{1}{\varepsilon^2}
  \left(\int_{\widetilde{B}_\delta(z_\varepsilon)}e^{-\frac{2(q-1)\theta {d(z,z_\e)}}{\varepsilon}}dz\right)^\frac{1}{2}+\left(\int_{\widetilde{B}_\delta(z_\varepsilon)}e^{-\frac{2\theta {d(z,z_\e)}}{\varepsilon}}dz\right)^\frac{1}{2}\right)^2
    \\\leq&\widetilde{C}\left(\left(\int_{\widetilde{B}_\delta(z_\varepsilon)}e^{-\frac{2\theta {d(z,z_\e)}}{2\varepsilon}}dz\right)^\frac{1}{2}+
  \left(\int_{\widetilde{B}_\delta(z_\varepsilon)}e^{-\frac{2(q-1)\theta {d(z,z_\e)}}{2\varepsilon}}dz\right)^\frac{1}{2}+\left(\int_{\widetilde{B}_\delta(z_\varepsilon)}e^{-\frac{2\theta {d(z,z_\e)}}{\varepsilon}}dz\right)^\frac{1}{2}\right)^2
  \\\leq&\widetilde{C}\int_{\widetilde{B}_\delta(z_\varepsilon)}e^{-\frac{\theta {d(z,z_\e)}}{\varepsilon}}dz,
\end{aligned}
\end{equation}
where we recall that $a(z)\leq a_1$ and $q>2$,\\
according to \eqref{xj1},
\begin{equation}\label{yph3}
\begin{aligned}
\int_{\widetilde{B}_\delta(z_\varepsilon)}{u_\varepsilon}_{x_jx_i}^2dz\leq &C\left(\frac{1}{\varepsilon^2}\|a(z)u_\varepsilon-u_\varepsilon^{q-1}\|_{L^2(\widetilde{B}_\delta(z_\varepsilon))}
  \right)^2
  \\\leq&\widetilde{C}\int_{\widetilde{B}_\delta(z_\varepsilon)}e^{-\frac{\theta {d(z,z_\e)}}{\varepsilon}}dz,
\end{aligned}
\end{equation}
according to \eqref{xj3},
\begin{equation}\label{yph4}
\begin{aligned}
  \int_{\widetilde{B}_\delta(z_\varepsilon)}|x|^{2\gamma}{u_\varepsilon}_{y_hx_i}^2dz\leq
  &C\left(\frac{1}{\varepsilon^2}\|a(z)u_\varepsilon-u_\varepsilon^{q-1}\|_{L^2(\widetilde{B}_\delta(z_\varepsilon))}
  \right)^2
  \\\leq&\widetilde{C}\int_{\widetilde{B}_\delta(z_\varepsilon)}e^{-\frac{\theta {d(z,z_\e)}}{\varepsilon}}dz,
\end{aligned}
\end{equation}
since $u\in H^{1,2}_{\gamma-\frac{1}{2}}(\R^{N+l})$, by using \eqref{xj2} with $\phi(x)=|x|^{2\gamma-1}$, we obtain
\begin{equation}\label{yph5}
\begin{aligned}
  \int_{\widetilde{B}_\delta(z_\varepsilon)}2\gamma|x|^{2\gamma-1}{u_\varepsilon}_{y_h}^2dz\leq
  &C\left(\frac{1}{\varepsilon^2}\|a(z)u_\varepsilon-u_\varepsilon^{q-1}\|_{L^2(\widetilde{B}_\delta(z_\varepsilon))}
  \right)^2
  \\\leq&\widetilde{C}\int_{\widetilde{B}_\delta(z_\varepsilon)}e^{-\frac{\theta {d(z,z_\e)}}{\varepsilon}}dz.
\end{aligned}
\end{equation}
From \eqref{yph1}-\eqref{yph5}, we have the estimate
\begin{equation}\label{yph6}
  J_1=\int_{\partial\widetilde{B}_\delta(z_\varepsilon)}|\nabla_\gamma u_\varepsilon|^2\nu_x^idS\leq\widetilde{C}\int_{\widetilde{B}_\delta(z_\varepsilon)}e^{-\frac{\theta {d(z,z_\e)}}{\varepsilon}}dz.
\end{equation}

Similarly, for $J_2$
\begin{equation}\label{yph8}
\begin{aligned}
&\int_{\partial\widetilde{B}_\delta(z_\varepsilon)}\left(\frac{\partial u_\varepsilon}{\partial \nu_x}+|x|^{2\gamma}\frac{\partial u_\varepsilon}{\partial \nu_y}\right)\frac{\partial u_\varepsilon}{\partial x_i}dS
 \\=&\int_{\partial\widetilde{B}_\delta(z_\varepsilon)}\left(\nabla_xu_\varepsilon\cdot\nu_x
 +|x|^{2\gamma}\nabla_yu_\varepsilon\cdot\nu_y
 \right)\frac{\partial u_\varepsilon}{\partial x_i}dS
 \\=&\sum_{j=1}^N\int_{\partial\widetilde{B}_\delta(z_\varepsilon)}{u_\varepsilon}_{x_j}{u_\e}_{x_i}\nu_x^jdS
    +\sum_{h=1}^l\int_{\partial\widetilde{B}_\delta(z_\varepsilon)}|x|^{2\gamma}{u_\e}_{y_h}{u_\e}_{x_i}\nu_y^hdS
 \\=&\sum_{j=1}^N\int_{\widetilde{B}_\delta(z_\varepsilon)}\frac{\partial}{\partial x_j}({u_\e}_{x_j}{u_\e}_{x_i})dz
    +\sum_{h=1}^l\int_{\widetilde{B}_\delta(z_\varepsilon)}\frac{\partial}{\partial y_h}(|x|^{2\gamma}{u_\e}_{y_h}{u_\e}_{x_i})dz
 \\=&\sum_{j=1}^N\int_{\widetilde{B}_\delta(z_\varepsilon)}{u_\e}_{x_jx_j}{u_\e}_{x_i}+{u_\e}_{x_j}{u_\e}_{x_ix_j}dz
 \\+&\sum_{h=1}^l\int_{\widetilde{B}_\delta(z_\varepsilon)}|x|^{2\gamma}{u_\e}_{y_hy_h}{u_\e}_{x_i}+|x|^{2\gamma}
 {u_\e}_{y_h}{u_\e}_{x_iy_h}dz,
\end{aligned}
\end{equation}
where
\begin{equation}\label{yph9}
\begin{aligned}
 &\int_{\widetilde{B}_\delta(z_\varepsilon)}{u_\e}_{x_jx_j}{u_\e}_{x_i}+{u_\e}_{x_j}{u_\e}_{x_ix_j}dz
 \\\leq&\frac{1}{2}\left(\int_{\widetilde{B}_\delta(z_\varepsilon)}{u_\e}_{x_jx_j}^2+{u_\e}_{x_i}^2+{u_\e}_{x_j}^2
 +{u_\e}_{x_ix_j}^2dz\right)
 \\\leq&\widetilde{C}\int_{\widetilde{B}_\delta(z_\varepsilon)}e^{-\frac{\theta {d(z,z_\e)}}{\varepsilon}}dz,
\end{aligned}
\end{equation}
this is due to \eqref{xj1} and \eqref{xj2} and similar to \eqref{yph2} and \eqref{yph3},\\
where
\begin{equation}\label{yph10}
\begin{aligned}
  &\int_{\widetilde{B}_\delta(z_\varepsilon)}|x|^{2\gamma}{u_\e}_{y_hy_h}{u_\e}_{x_i}
  +|x|^{2\gamma}{u_\e}_{y_h}{u_\e}_{x_iy_h}dz
  \\\leq&\frac{1}{2}\left(\int_{\widetilde{B}_\delta(z_\varepsilon)}|x|^{4\gamma}{u_\e}_{y_hy_h}^2+{u_\e}_{x_i}^2
  +|x|^{2\gamma}{u_\e}_{y_h}^2+|x|^{2\gamma}{u_\e}_{x_iy_h}^2dz\right)
  \\\leq&\widetilde{C}\int_{\widetilde{B}_\delta(z_\varepsilon)}e^{-\frac{\theta {d(z,z_\e)}}{\varepsilon}}dz,
\end{aligned}
\end{equation}
this is owing to \eqref{xj1}, \eqref{xj2} and \eqref{xj3}.\\
From \eqref{yph8}-\eqref{yph10}, we have the estimate
\begin{equation}\label{yph12}
    J_2=\int_{\partial\widetilde{B}_\delta(z_\varepsilon)}\left(\frac{\partial{u_\e}}{\partial \nu_x}+|x|^{2\gamma}\frac{\partial{u_\e}}{\partial \nu_y}\right)\frac{\partial{u_\e}}{\partial x_i}dS
  \leq \widetilde{C}\int_{\widetilde{B}_\delta(z_\varepsilon)}e^{-\frac{\theta {d(z,z_\e)}}{\varepsilon}}dz.
\end{equation}

For $J_3$,
\begin{equation}\label{yph13}
\begin{aligned}
  &\int_{\partial\widetilde{B}_\delta(z_\varepsilon)}|\nabla_\gamma u_\e|^2\nu_y^jdS
  \\=&\int_{\widetilde{B}_\delta(z_\varepsilon)}\frac{\partial}{\partial y_j}(|\nabla_\gamma u_\e|^2)dz
  \\=&\sum_{i=1}^N\int_{\widetilde{B}_\delta(z_\varepsilon)}2{u_\e}_{x_i}{u_\e}_{x_iy_j}dz
  +\sum_{h=1}^l\int_{\widetilde{B}_\delta(z_\varepsilon)}2|x|^{2\gamma}{u_\e}_{y_h}{u_\e}_{y_hy_j}dz.
\end{aligned}
\end{equation}
Since $u\in H^{2,2}_\gamma(\R^{N+l})\cap W^{2,2}(\R^{N+l})$, by using Lemma \ref{jc1} with $\phi(x)=|x|^{2\gamma}$ and $\phi(x)=1$, we obtain
\begin{equation}\label{yph14}
\begin{aligned}
  &\int_{\widetilde{B}_\delta(z_\varepsilon)}2{u_\e}_{x_i}{u_\e}_{x_iy_j}dz
 \\\leq&\int_{\widetilde{B}_\delta(z_\varepsilon)}{u_\e}_{x_i}^2+{u_\e}_{x_iy_j}^2dz
  \\\leq&\widetilde{C}\int_{\widetilde{B}_\delta(z_\varepsilon)}e^{-\frac{\theta {d(z,z_\e)}}{\varepsilon}}dz,
\end{aligned}
\end{equation}
and
\begin{equation}\label{yph15}
\begin{aligned}
  &\int_{\widetilde{B}_\delta(z_\varepsilon)}2|x|^{2\gamma}{u_\e}_{y_h}{u_\e}_{y_hy_j}dz
  \\\leq&\int_{\widetilde{B}_\delta(z_\varepsilon)}|x|^{4\gamma}{u_\e}_{y_hy_j}^2+{u_\e}_{y_h}^2dz
    \\\leq &\widetilde{C}\int_{\widetilde{B}_\delta(z_\varepsilon)}e^{-\frac{\theta {d(z,z_\e)}}{\varepsilon}}dz.
\end{aligned}
\end{equation}
From \eqref{yph13}-\eqref{yph15}, we have the esitimate
\begin{equation}\label{yph17}
  J_3=\int_{\partial\widetilde{B}_\delta(z_\e)}|\nabla_\gamma u_\e|^2\nu_y^jdS
  \leq\widetilde{C}\int_{\widetilde{B}_\delta(z_\varepsilon)}e^{-\frac{\theta {d(z,z_\e)}}{\varepsilon}}dz.
\end{equation}

For $J_4$,
\begin{equation}\label{yph18}
\begin{aligned}
  &\int_{\partial\widetilde{B}_\delta(z_\varepsilon)}\left(\frac{\partial u_\varepsilon}{\partial \nu_x}+|x|^{2\gamma}\frac{\partial u_\varepsilon}{\partial \nu_y}\right)\frac{\partial u_\varepsilon}{\partial y_j}dS
  \\=&\int_{\partial\widetilde{B}_\delta(z_\varepsilon)}\left(
  \nabla_xu_\varepsilon\cdot\nu_x+|x|^{2\gamma}\nabla_yu_\varepsilon\cdot\nu_y
  \right)\frac{\partial u_\varepsilon}{\partial y_j}dS
  \\=&\sum_{i=1}^N\int_{\partial\widetilde{B}_\delta(z_\varepsilon)}{u_\e}_{x_i}{u_\e}_{y_j}\nu_x^idS
  +\sum_{h=1}^l\int_{\partial\widetilde{B}_\delta(z_\varepsilon)}|x|^{2\gamma}{u_\e}_{y_h}{u_\e}_{y_j}\nu_y^hdS
  \\=&\sum_{i=1}^N\int_{\widetilde{B}_\delta(z_\varepsilon)}\frac{\partial}{\partial x_i}({u_\e}_{x_i}{u_\e}_{y_j})dz
  +\sum_{h=1}^l\int_{\widetilde{B}_\delta(z_\varepsilon)}\frac{\partial}{\partial y_h}(|x|^{2\gamma}{u_\e}_{y_h}{u_\e}_{y_j})dz
  \\=&\sum_{i=1}^N\int_{\widetilde{B}_\delta(z_\varepsilon)}{u_\e}_{x_ix_i}{u_\e}_{y_j}+{u_\e}_{x_i}{u_\e}_{y_jx_i}dz
  \\+&\sum_{h=1}^l\int_{\widetilde{B}_\delta(z_\varepsilon)}|x|^{2\gamma}{u_\e}_{y_hy_h}{u_\e}_{y_j}
  +|x|^{2\gamma}{u_\e}_{y_h}{u_\e}_{y_jy_h}dz.
\end{aligned}
\end{equation}
Since $u\in H^{2,2}_\gamma(\R^{N+l})\cap W^{2,2}(\R^{N+l})$, by using Lemma \ref{jc1} with $\phi(x)=|x|^{2\gamma}$ and $\phi(x)=1$, we obtain
\begin{equation}\label{yph19}
\begin{aligned}
  &\int_{\widetilde{B}_\delta(z_\varepsilon)}{u_\e}_{x_ix_i}{u_\e}_{y_j}+{u_\e}_{x_i}{u_\e}_{y_jx_i}dz
  \\\leq&\frac{1}{2}\left(\int_{\widetilde{B}_\delta(z_\varepsilon)}{u_\e}_{x_ix_i}^2+{u_\e}_{y_j}^2
  +{u_\e}_{x_i}^2+{u_\e}_{y_jx_i}^2dz\right)
  \\\leq&\widetilde{C}\int_{\widetilde{B}_\delta(z_\varepsilon)}e^{-\frac{\theta {d(z,z_\e)}}{\varepsilon}}dz,
\end{aligned}
\end{equation}
and
\begin{equation}\label{yph20}
\begin{aligned}
  &\int_{\widetilde{B}_\delta(z_\varepsilon)}|x|^{2\gamma}{u_\e}_{y_hy_h}{u_\e}_{y_j}
  +|x|^{2\gamma}{u_\e}_{y_h}{u_\e}_{y_jy_h}dz
  \\\leq&\int_{\widetilde{B}_\delta(z_\varepsilon)}|x|^{4\gamma}{u_\e}_{y_hy_h}^2+{u_\e}_{y_j}^2
  +|x|^{4\gamma}{u_\e}_{y_jy_h}^2+{u_\e}_{y_h}^2dz
  \\\leq&\widetilde{C}\int_{\widetilde{B}_\delta(z_\varepsilon)}e^{-\frac{\theta {d(z,z_\e)}}{\varepsilon}}dz.
\end{aligned}
\end{equation}
From \eqref{yph18}-\eqref{yph20}, we have the estimate
\begin{equation}\label{yph22}
  J_4=\int_{\partial\widetilde{B}_\delta(z_\e)}\left(\frac{\partial {u_\e}}{\partial \nu_x}+|x|^{2\gamma}\frac{\partial {u_\e}}{\partial \nu_y}\right)\frac{\partial {u_\e}}{\partial y_j}dS
  \leq\widetilde{C}\int_{\widetilde{B}_\delta(z_\varepsilon)}e^{-\frac{\theta {d(z,z_\e)}}{\varepsilon}}dz.
\end{equation}
From what we discuss above, we have proved \eqref{yph23}.
\end{proof}

\begin{proof}[\textbf{Proof of Corollary \ref{th1}:}]
Take $D=\widetilde{B}_\delta(z_\varepsilon)$, it follows from Lemma \ref{1010} that \eqref{.ma50} and \eqref{.ma60} are equivalent to
\begin{equation}\label{..10}
  \int_{\widetilde{B}_\delta(z_\varepsilon)}\frac{\partial a(z)}{\partial x_i}u_\varepsilon^2dz=O\left(e^{-\frac{\theta\delta}{\varepsilon}}+\int_{\widetilde{B}_\delta(z_\varepsilon)}e^{-\frac{\theta {d(z,z_\e)}}{\varepsilon}}dz\right),
\end{equation}

\begin{equation}\label{..11}
  \varepsilon^2\int_{\widetilde{B}_\delta(z_\varepsilon)}|\nabla_y u_\varepsilon|^2|x|^{2(\gamma-1)}x_idz=O\left(e^{-\frac{\theta\delta}{\varepsilon}}+\int_{\widetilde{B}_\delta(z_\varepsilon)}e^{-\frac{\theta {d(z,z_\e)}}{\varepsilon}}dz\right).
\end{equation}

\begin{equation}\label{..15}
 \int_{\widetilde{B}_\delta(z_\varepsilon)}\frac{\partial a(z)}{\partial y_j}u_\varepsilon^2dz=O\left(e^{-\frac{\theta\delta}{\varepsilon}}+\int_{\widetilde{B}_\delta(z_\varepsilon)}e^{-\frac{\theta {d(z,z_\e)}}{\varepsilon}}dz\right).
\end{equation}

On the other hand, from \eqref{..10}, let
\begin{equation*}
\begin{aligned}
  &b(\widetilde{z})=u_\varepsilon(\varepsilon\widetilde{x}+x_\varepsilon,\varepsilon^{\gamma+1}\widetilde{y}+y_\varepsilon),\;\;x=\varepsilon\widetilde{x}+x_\varepsilon,\;\;
  y=\varepsilon^{\gamma+1}\widetilde{y}+y_\varepsilon,
  \\&\widetilde{u_\varepsilon}(z)=u_\varepsilon(\varepsilon x+x_\varepsilon,\varepsilon^{\gamma+1}y+y_\varepsilon),
\end{aligned}
\end{equation*}
we can prove that
\begin{equation}\label{..12}
\begin{aligned}
&\int_{\widetilde{B}_\delta(z_\varepsilon)}\frac{\partial a(z)}{\partial x_i}u_\varepsilon^2dz
\\=&\int_{\widetilde{B}_\frac{\delta}{\varepsilon}(z_\varepsilon)}\frac{\partial a(\varepsilon\widetilde{x}+x_\varepsilon,\varepsilon^{\gamma+1}\widetilde{y}+y_\varepsilon)}{\widetilde{x_i}}\frac{1}{\varepsilon}\left(b(\widetilde{z})\right)^{2}
\varepsilon^{N+(\gamma+1)l}d\widetilde{z}
\\=&\varepsilon^{N+(\gamma+1)l-1}\int_{\widetilde{B}_\frac{\delta}{\varepsilon}(z_\varepsilon)}\frac{\partial a(\varepsilon x+x_\varepsilon,\varepsilon^{\gamma+1}y+y_\varepsilon)}{x_i}\left(\widetilde{u_\varepsilon}(z)\right)^{2}dz
\\=&\varepsilon^{N+(\gamma+1)l-1}\left(\frac{\partial a(z)}{\partial x_i}\Big|_{z=z_\varepsilon}\int_{\widetilde{B}_\frac{\delta}{\varepsilon}(z_\varepsilon)}\left(\widetilde{u_\varepsilon}(z)\right)^{2}dz+o(1)\right),
\end{aligned}
\end{equation}
where $\displaystyle\int_{\widetilde{B}_\frac{\delta}{\varepsilon}(z_\varepsilon)}\left(\widetilde{u_\varepsilon}(z)\right)^{2}dz$ is positive.
By combining \eqref{..10} and \eqref{..12}, we want to balance the exponentially small quantity and algebraically small quantity, thus we obtain
\begin{equation*}
  \frac{\partial a(z)}{\partial x_i}\Big|_{z=z_\varepsilon}=o(1)~ \mbox{for} ~i=1,\ldots,N,
\end{equation*}
i.e.
\begin{equation}\label{..13}
  \nabla_x a(z_0)=0.
\end{equation}

From \eqref{..11}, we can prove that
\begin{equation}\label{..16}
\begin{aligned}
&\varepsilon^2\int_{\widetilde{B}_\delta(z_\varepsilon)}|\nabla_y u_\varepsilon|^2|x|^{2(\gamma-1)}x_idz
\\=&\varepsilon^2\int_{\widetilde{B}_{\frac{\delta}{\varepsilon}}(0)}\frac{1}{\varepsilon^{2(\gamma+1)}}|\nabla_{\widetilde{y}}b(\widetilde{z})|^2
|\varepsilon\widetilde{x}+x_\varepsilon|^{2(\gamma-1)}(\varepsilon\widetilde{x_i}+x_{\varepsilon,i})\varepsilon^{N+(\gamma+1)l}d\widetilde{z}
\\=&\varepsilon^{N+(\gamma+1)(l-2)+2}\int_{\widetilde{B}_{\frac{\delta}{\varepsilon}}(0)}|\nabla_y\widetilde{u_\varepsilon}(z)|^2
|\varepsilon x+x_\varepsilon|^{2(\gamma-1)}(\varepsilon x_i+x_{\varepsilon,i})dz
\\=&\varepsilon^{N+(\gamma+1)(l-2)+2}\left(\int_{\widetilde{B}_{\frac{\delta}{\varepsilon}}(0)}|\nabla_y\widetilde{u_\varepsilon}(z)|^2
|\varepsilon x+x_\varepsilon|^{2(\gamma-1)}x_{\varepsilon,i}dz+o(1)\right),
\end{aligned}
\end{equation}
where $|\nabla_y\widetilde{u_\varepsilon}(z)|^2\geq0$.
By combining \eqref{..11} and \eqref{..16}, we want to balance the exponentially small quantity and algebraically small quantity, thus we obtain
\begin{equation*}
  x_{\varepsilon,i}=o(1)~ \mbox{for} ~i=1,\ldots,N,
\end{equation*}
i.e.
\begin{equation}\label{..17}
  x_0=0.
\end{equation}

Finally from \eqref{..15}, we can prove that
\begin{equation}\label{..18}
\begin{aligned}
&\int_{\widetilde{B}_\delta(z_\varepsilon)}\frac{\partial a(z)}{\partial y_j}u_\varepsilon^2dz
\\=&\int_{\widetilde{B}_\frac{\delta}{\varepsilon}(z_\varepsilon)}\frac{\partial a(\varepsilon\widetilde{x}+x_\varepsilon,\varepsilon^{\gamma+1}\widetilde{y}+y_\varepsilon)}{\widetilde{y_j}}\frac{1}{\varepsilon^{\gamma+1}}\left(b(\widetilde{z})\right)^{2}
\varepsilon^{N+(\gamma+1)l}d\widetilde{z}
\\=&\varepsilon^{N+(\gamma+1)(l-1)}\int_{\widetilde{B}_\frac{\delta}{\varepsilon}(z_\varepsilon)}\frac{\partial a(\varepsilon x+x_\varepsilon,\varepsilon^{\gamma+1}y+y_\varepsilon)}{y_j}\left(\widetilde{u_\varepsilon}(z)\right)^{2}dz
\\=&\varepsilon^{N+(\gamma+1)(l-1)}\left(\frac{\partial a(z)}{\partial y_j}\Big|_{z=z_\varepsilon}\int_{\widetilde{B}_\frac{\delta}{\varepsilon}(z_\varepsilon)}\left(\widetilde{u_\varepsilon}(z)\right)^{2}dz+o(1)\right),
\end{aligned}
\end{equation}
where $\displaystyle\int_{\widetilde{B}_\frac{\delta}{\varepsilon}(z_\varepsilon)}\left(\widetilde{u_\varepsilon}(z)\right)^{2}dz$ is positive.
By combining \eqref{..15} and \eqref{..18}, we want to balance the exponentially small quantity and algebraically small quantity, thus we obtain
\begin{equation*}
  \frac{\partial a(z)}{\partial y_j}\Big|_{z=z_\varepsilon}=o(1)~ \mbox{for} ~j=1,\ldots,l,
\end{equation*}
i.e.
\begin{equation}\label{..19}
  \nabla_y a(z_0)=0.
\end{equation}

According to \eqref{..13}, \eqref{..17} and \eqref{..19}, the concentration point $z_0$ for solution of \eqref{p10} satisfies $\nabla a(z_0)=0$ and $x_0=0$.
\end{proof}

\begin{Rem}
In Corollary \ref{th1}, we know that the concentration point $z_0=(0,y_0)$ is the critical point of $a(z)$ if $a(z)$ has critical points in this form. So hereafter, we also assume that the explosion point is in the form of $z_\varepsilon=(0,y_\varepsilon)$ for convenience of discussion.
\end{Rem}

\section{Pohozaev identities generated from scaling}

In this section, we prove Pohozaev identity generated from scaling of \eqref{ijk4} in Theorem \ref{300}. Then we study the nonexistence of nontrivial solutions for the Dirichlet problem \eqref{.ma9} under some assumptions by such identity in Corollary \ref{th1.1}.

\begin{proof}[\textbf{Proof of Theorem \ref{300}:}]
According to our notation in Theorem \ref{ma1}, we have \eqref{ma6} and \eqref{ma13}. Note that
\begin{equation}\label{ma19}
\begin{cases}
\big(x_iF(z,u)\big)_{x_i}=F(z,u)+x_i\big(\frac{\partial F(z,u)}{\partial x_i}+f(z,u)\frac{\partial u}{\partial x_i}\big),\\[3mm]
\big(y_jF(z,u)\big)_{y_j}=F(z,u)+y_j\big(\frac{\partial F(z,u)}{\partial y_j}+f(z,u)\frac{\partial u}{\partial y_j}\big),
\end{cases}
\end{equation}
for $i=1,\cdots,N$, and $j=1,\cdots,l$, in which $(\cdot)_{x_i}$ denotes derivation for each $x_i$ and $\frac{\partial(\cdot)}{\partial x_i}$ denotes derivation for $x_i$ in first variable.\\

From \eqref{ma19} and by integration by parts , we have
\begin{equation}\label{ma20}
\begin{aligned}
    \int_\Omega F(z,u)dz
    =&\int_\Omega\big(x_iF(z,u)\big)_{x_i}-x_i\frac{\partial F(z,u)}{\partial x_i}-x_if(z,u)\frac{\partial u}{\partial x_i}dz
  \\\overset{I}=&\int_{\partial\Omega}x_iF(z,u)\nu_x^idS-\int_\Omega x_i\frac{\partial F(z,u)}{\partial x_i}dz-\int_\Omega x_if(z,u)\frac{\partial u}{\partial x_i}dz,
\end{aligned}
\end{equation}
and
\begin{equation}\label{ma21}
\begin{aligned}
    \int_\Omega F(z,u)dz
    =&\int_\Omega\big(y_jF(z,u)\big)_{y_j}-y_j\frac{\partial F(z,u)}{\partial y_j}-y_jf(z,u)\frac{\partial u}{\partial y_j}dz
  \\\overset{I}=&\int_{\partial\Omega}y_jF(z,u)\nu_y^jdS-\int_\Omega y_j\frac{\partial F(z,u)}{\partial y_j}dz-\int_\Omega y_jf(z,u)\frac{\partial u}{\partial y_j}dz.
\end{aligned}
\end{equation}
Sum \eqref{ma20} from $1$ to $N$ and sum \eqref{ma21} from $1$ to $l$ respectively, we obtain
\begin{equation}\label{ma22}
\begin{aligned}
  N\int_\Omega F(z,u)dz&=\int_{\partial \Omega}F(z,u)(x\cdot\nu_x)dS-\int_\Omega x\cdot\nabla_xF(z,u)dz-\int_\Omega (x\cdot\nabla_xu)f(z,u)dz
\\&=-\int_\Omega x\cdot\nabla_xF(z,u)dz-\int_\Omega(x\cdot\nabla_xu)f(z,u)dz,
\end{aligned}
\end{equation}
and
\begin{equation}\label{ma23}
\begin{aligned}
  l\int_\Omega F(z,u)dz&=\int_{\partial \Omega}F(z,u)(y\cdot\nu_y)dS-\int_\Omega y\cdot\nabla_yF(z,u)dz-\int_\Omega (y\cdot\nabla_yu)f(z,u)dz
  \\&=-\int_\Omega y\cdot\nabla_yF(z,u)dz-\int_\Omega(y\cdot\nabla_yu)f(z,u)dz,
\end{aligned}
\end{equation}
since $F(z,u)=0$ on $\partial\Omega$.

Add \eqref{ma22} and \eqref{ma23}, we have
\begin{equation}\label{ma24}
  (N+l)\int_\Omega F(z,u)dz=-\int_\Omega z\cdot\nabla_zF(z,u)dz-\int_\Omega(z\cdot\nabla_zu)f(z,u)dz,
\end{equation}
where $\nabla_zF(z,u)$ represents the gradient of $F$ with respect to $z$, and this gradient has nothing to do with $u$.

Let's calculate the last term of RHS of \eqref{ma24}, since other terms will not be changed.
\begin{equation}\label{ma25}
\begin{aligned}
  &-\int_\Omega(z\cdot\nabla_zu)f(z,u)dz
  \\\overset{\eqref{ijk4}}=&\int_\Omega(z\cdot\nabla_zu)\Delta_\gamma u dz
  \\=&\int_\Omega(x\cdot\nabla_xu+y\cdot\nabla_yu)(\Delta_xu+|x|^{2\gamma}\Delta_yu)dz
  \\=&\int_\Omega\Delta_xu(x\cdot\nabla_xu)dz
    +\int_\Omega\Delta_yu(|x|^{2\gamma} x\cdot\nabla_xu)dz
  \\+&\int_\Omega\Delta_xu(y\cdot\nabla_yu)dz
    +\int_\Omega\Delta_yu(|x|^{2\gamma}y\cdot\nabla_yu)dz
    \\:=&I_1+I_2+I_3+I_4.
\end{aligned}
\end{equation}
Next we will apply Green first formula and integration by parts to calculate four terms in \eqref{ma25} one by one for the purpose of our aim.\\
First term $I_1$:
\begin{equation}\label{ma26}
\begin{aligned}
  &\int_\Omega\Delta_xu(x\cdot\nabla_xu)dz
  \\\overset{G}=&\int_{\partial \Omega}(x\cdot\nabla_xu)\frac{\partial u}{\partial \nu_x}dS-\int_\Omega\nabla_x(x\cdot\nabla_xu)\cdot\nabla_xudz
  \\\overset{I}=&\int_{\partial \Omega}|\nabla_xu|^2(x\cdot\nu_x)dS-(1-\frac{N}{2})\int_\Omega|\nabla_xu|^2dz-\frac{1}{2}\int_{\partial \Omega}|\nabla_xu|^2(x\cdot\nu_x)dS
  \\=&\frac{1}{2}\int_{\partial \Omega}|\nabla_xu|^2(x\cdot\nu_x)dS+(\frac{N}{2}-1)\int_\Omega|\nabla_xu|^2dz.
\end{aligned}
\end{equation}
Second term $I_2$:
\begin{equation}\label{ma27}
\begin{aligned}
  &\int_\Omega\Delta_yu(|x|^{2\gamma}x\cdot\nabla_xu)dz
\\\overset{G}=&\int_{\partial \Omega}(x\cdot\nabla_xu)|x|^{2\gamma}\frac{\partial u}{\partial\nu_y}dS-\int_\Omega\nabla_y\big((x\cdot\nabla_xu)|x|^{2\gamma}\big)\cdot\nabla_yudz
\\=&\int_{\partial \Omega}(x\cdot\nabla_xu)|x|^{2\gamma}(\nabla_yu\cdot\nu_y)dS-\frac{1}{2}\int_\Omega|x|^{2\gamma}\big(x\cdot\nabla_x(|\nabla_yu|^2)\big)dz
\\\overset{I}=&\int_{\partial \Omega}|x|^{2\gamma}(x\cdot\nabla_xu)(\nabla_yu\cdot\nu_y)dS
     +(\frac{N}{2}+\gamma)\int_\Omega|x|^{2\gamma}|\nabla_yu|^2dz
     -\frac{1}{2}\int_{\partial \Omega}|x|^{2\gamma}|\nabla_yu|^2(x\cdot\nu_x)dS.
\end{aligned}
\end{equation}
Third term $I_3$:
\begin{equation}\label{ma28}
\begin{aligned}
  &\int_\Omega\Delta_xu(y\cdot\nabla_yu)dz
  \\\overset{G}=&\int_{\partial \Omega}(y\cdot\nabla_yu)\frac{\partial u}{\partial\nu_x}dS-\int_\Omega\nabla_x(y\cdot\nabla_yu)\cdot\nabla_xudz
   \\=&\int_{\partial \Omega}(y\cdot\nabla_yu)(\nabla_xu\cdot\nu_x)dS-\frac{1}{2}\int_\Omega y\cdot\nabla_y(|\nabla_xu|^2)dz
   \\\overset{I}=&\int_{\partial \Omega}(y\cdot\nabla_yu)(\nabla_xu\cdot\nu_x)dS+\frac{l}{2}\int_\Omega|\nabla_xu|^2dz-\frac{1}{2}\int_{\partial \Omega}|\nabla_xu|^2(y\cdot\nu_y)dS.
\end{aligned}
\end{equation}
Fourth term $I_4$:
\begin{equation}\label{ma29}
\begin{aligned}
  &\int_\Omega\Delta_yu(|x|^{2\gamma}y\cdot\nabla_yu)dz
  \\\overset{G}=&\int_{\partial \Omega}(y\cdot\nabla_yu)|x|^{2\gamma}\frac{\partial u}{\partial\nu_y}dS-\int_\Omega\nabla_y\big((y\cdot\nabla_yu)|x|^{2\gamma}\big)\cdot\nabla_yudz
  \\=&\int_{\partial \Omega}(y\cdot\nabla_yu)|x|^{2\gamma}(\nabla_yu\cdot\nu_y)dS-\int_\Omega|x|^{2\gamma}|\nabla_yu|^2dz-\frac{1}{2}\int_\Omega|x|^{2\gamma}\big(y\cdot\nabla_y(|\nabla_yu|^2)\big)dz
   \\\overset{I}=&\int_{\partial \Omega}(y\cdot\nu_y)|x|^{2\gamma}|\nabla_yu|^2dS+(\frac{l}{2}-1)\int_\Omega|x|^{2\gamma}|\nabla_yu|^2dz-\frac{1}{2}\int_{\partial \Omega}(y\cdot\nu_y)|x|^{2\gamma}|\nabla_yu|^2dS
   \\=&\frac{1}{2}\int_{\partial \Omega}|x|^{2\gamma}|\nabla_yu|^2(y\cdot\nu_y)dS+(\frac{l}{2}-1)\int_\Omega|x|^{2\gamma}|\nabla_yu|^2dz.
\end{aligned}
\end{equation}
From \eqref{ma24}-\eqref{ma29}, we deduce that
\begin{equation}\label{ma30}
\begin{aligned}
   &(N+l)\int_\Omega F(z,u)dz
\\=&-\int_\Omega z\cdot\nabla_zF(z,u)dz
\\+&(\frac{N}{2}-1)\int_\Omega|\nabla_xu|^2dz+\frac{1}{2}\int_{\partial \Omega}|\nabla_xu|^2(x\cdot\nu_x)dS
\\+&\int_{\partial \Omega}|x|^{2\gamma}(x\cdot\nabla_xu)(\nabla_yu\cdot\nu_y)dS
     +(\frac{N}{2}+\gamma)\int_\Omega|x|^{2\gamma}|\nabla_yu|^2dz
     -\frac{1}{2}\int_{\partial \Omega}|x|^{2\gamma}|\nabla_yu|^2(x\cdot\nu_x)dS
\\+&\int_{\partial \Omega}(y\cdot\nabla_yu)(\nabla_xu\cdot\nu_x)dS+\frac{l}{2}\int_\Omega|\nabla_xu|^2dz-\frac{1}{2}\int_{\partial \Omega}|\nabla_xu|^2(y\cdot\nu_y)dS
\\+&\frac{1}{2}\int_{\partial \Omega}|x|^{2\gamma}|\nabla_yu|^2(y\cdot\nu_y)dS+(\frac{l}{2}-1)\int_\Omega|x|^{2\gamma}|\nabla_yu|^2dz.
\end{aligned}
\end{equation}
Note that $\nabla_xu=\frac{\partial u}{\partial\nu}\nu_x$, $\nabla_yu=\frac{\partial u}{\partial\nu}\nu_y$, since $u=0$ on $\partial\Omega$. Thus we obtain
\begin{equation}\label{ma31}
\begin{aligned}
   &(N+l)\int_\Omega F(z,u)dz
\\=&-\int_\Omega z\cdot\nabla_zF(z,u)dz
\\+&(\frac{N+l}{2}-1)\int_\Omega|\nabla_xu|^2dz+(\frac{N+l}{2}+\gamma-1)\int_\Omega|x|^{2\gamma}|\nabla_yu|^2dz
\\+&\frac{1}{2}\int_{\partial\Omega}\left(\frac{\partial u}{\partial\nu}\right)^2(\nu_x^2+|x|^{2\gamma}\nu_y^2)(x\cdot\nu_x+y\cdot\nu_y)dS
\\=&-\int_\Omega z\cdot\nabla_zF(z,u)dz
+(\frac{N+l}{2}-1)\int_\Omega|\nabla_\gamma u|^2dz+\gamma\int_\Omega|x|^{2\gamma}|\nabla_yu|^2dz
\\+&\frac{1}{2}\int_{\partial\Omega}\left(\frac{\partial u}{\partial\nu}\right)^2(\nu_x^2+|x|^{2\gamma}\nu_y^2)(x\cdot\nu_x+y\cdot\nu_y)dS.
\end{aligned}
\end{equation}
On the other hand by Green first formula,
\begin{equation}\label{ma32}
   -\int_\Omega f(z,u)udz
=-\int_\Omega|\nabla_\gamma u|^2dz,
\end{equation}
since $u=0$ on $\partial\Omega$.

Our result \eqref{.ma7} follows from \eqref{ma31} and \eqref{ma32}.
\end{proof}

Now we give an application of Pohozaev identity \eqref{.ma7}.
Consider the problem \eqref{.ma9}, we prove that there is only trivial solution for this problem under certain assumptions.

\begin{proof}[\textbf{Proof of Corollary \ref{th1.1}:}]
We apply Theorem \ref{300} directly. For \eqref{.ma9}, $f(z,u)=u^{\frac{N+l+2}{N+l-2}}$,~$F(z,u)=\frac{N+l-2}{2(N+l)}u^{\frac{2(N+l)}{N+l-2}}$.
So we have
\begin{equation}\label{zs2}
  (N+l)\int_\Omega F(z,u)dz=\left(\frac{N+l}{2}-1\right)\int_\Omega f(z,u)udz.
\end{equation}
Since $\nabla_zF(z,u)=0$, in \eqref{.ma7},
\begin{equation}\label{zs1}
  \int_\Omega z\cdot\nabla_z F(z,u)dz=0.
\end{equation}
Therefore, we find that \eqref{.ma7} transforms
\begin{equation}\label{.ma10}
\begin{aligned}
\gamma\int_\Omega|x|^{2\gamma}|\nabla_yu|^2dz
+\frac{1}{2}\int_{\partial\Omega}\left(\frac{\partial u}{\partial\nu}\right)^2(\nu_x^2+|x|^{2\gamma}\nu_y^2)(x\cdot\nu_x+y\cdot\nu_y)dS=0.
\end{aligned}
\end{equation}
If~$u$~is a nontrival solution of \eqref{.ma9},then $u$ satisfies \eqref{.ma10}.
According our assumption, the LHS of \eqref{.ma10} is positive, but the RHS of \eqref{.ma10} is equal to $0$.
This is impossible, thus $u=0$.\\
\end{proof}

\section{Kelvin transformation}

In this section, we prove the change of Grushin operator by Kelvin transformation in Theorem \ref{th2}. Then we estimate the decay rate of solution at infinity for the problem \eqref{zzzzzzzzzzzzzzz} in Corollary \ref{th3}.

\begin{proof}[\textbf{Proof of Theorem \ref{th2}:}]
We consider Grushin operator in the form of
\begin{equation}\label{1}
  \Delta_\gamma u=\Delta_xu+|x|^{2\gamma}\Delta_yu,
\end{equation}
where $z=(x,y)\in\R^{N+l}$.

We recall new distance between $0$ and $z$ on $\R^{N+l}$ as defined in Definition \ref{zrf1}:
\begin{equation}\label{2}
  d(z,0)=\left(\frac{1}{(1+\gamma)^2}|x|^{2+2\gamma}+|y|^2\right)^{\frac{1}{2+2\gamma}}.
\end{equation}
There is an any point $\widetilde{z}=(\widetilde{x},\widetilde{y})\in\R^{N+l}$.
Let
\begin{equation}\label{3}
  \widetilde{x}=\frac{x}{d^2(z,0)},\;\;\;\;\widetilde{y}=\frac{y}{d^{2+2\gamma}(z,0)}.
\end{equation}
Then we have
\begin{equation}\label{4}
  d(\widetilde{z},0)=\frac{1}{d(z,0)},
\end{equation}
and
\begin{equation}\label{5}
  x=\frac{\widetilde{x}}{d^2(\widetilde{z},0)},\;\;\;\;y=\frac{\widetilde{y}}{d^{2+2\gamma}(\widetilde{z},0)}.
\end{equation}
There are two functions: $u=u(z)$, $v=v(z)$. Note that
\begin{equation}\label{6}
\begin{aligned}
  \Delta_\gamma(uv)
  =&\Delta_x(uv)+|x|^{2\gamma}\Delta_y(uv)
\\=&v\Delta_xu+u\Delta_xv+2\nabla_xu\cdot\nabla_xv
\\+&|x|^{2\gamma}\left(v\Delta_yu+u\Delta_yv+2\nabla_yu\cdot\nabla_yv\right)
\\=&v\Delta_\gamma u+u\Delta_\gamma v+2\left(\nabla_xu\cdot\nabla_xv+|x|^{2\gamma}\nabla_yu\cdot\nabla_yv\right).
\end{aligned}
\end{equation}
We put
\begin{equation}\label{jbbb}
  w(\widetilde{z})=d^b(z(\widetilde{z}),0)u(z(\widetilde{z})),
\end{equation}
where $b>0$ is an undetermined coefficient and denote by $\Delta_\gamma$, $\Delta_\gamma^{(\widetilde{z})}$ the Grushin operator acting in $z$ and $\widetilde{z}$ variable. According to \eqref{6}, we have
\begin{equation}\label{7}
\begin{aligned}
  \Delta_\gamma^{(\widetilde{z})}w(\widetilde{z})
  =&d^b(z(\widetilde{z}),0)\Delta_\gamma^{(\widetilde{z})}u(z(\widetilde{z}))
  +u(z(\widetilde{z}))\Delta_\gamma^{(\widetilde{z})}d^b(z(\widetilde{z}),0)
  \\+&2\left(\nabla_{\widetilde{x}}u(z(\widetilde{z}))\cdot\nabla_{\widetilde{x}}d^b(z(\widetilde{z}),0)
  +|\widetilde{x}|^{2\gamma}
  \nabla_{\widetilde{y}}u(z(\widetilde{z}))\cdot\nabla_{\widetilde{y}}d^b(z(\widetilde{z}),0)\right).
\end{aligned}
\end{equation}
In \eqref{7}, if we take $b=N+(1+\gamma)l-2$, then
\begin{equation}\label{8}
  u(z(\widetilde{z}))\Delta_\gamma^{(\widetilde{z})}d^b(z(\widetilde{z}),0)=0.
\end{equation}

Indeed, note that $d^b(z(\widetilde{z}),0)=d^{-b}(\widetilde{z},0)$.
Let's take some derivatives for preparation:
\begin{equation}\label{9}
  \frac{\partial d(\widetilde{z},0)}{\partial\widetilde{x_i}}=\frac{1}{(1+\gamma)^2}\widetilde{x_i}
  |\widetilde{x}|^{2\gamma}d^{-1-2\gamma}(\widetilde{z},0),
\end{equation}
\begin{equation}\label{10}
  \frac{\partial d(\widetilde{z},0)}{\partial\widetilde{y_j}}=\frac{1}{1+\gamma}\widetilde{y_j}d^{-1-2\gamma}(\widetilde{z},0),
\end{equation}
\begin{equation}\label{11}
  \frac{\partial d^{-b}(\widetilde{z},0)}{\partial\widetilde{x_i}}
  =-b\frac{1}{(1+\gamma)^2}\widetilde{x_i}
  |\widetilde{x}|^{2\gamma}d^{-b-2-2\gamma}(\widetilde{z},0),
\end{equation}
\begin{equation}\label{12}
   \frac{\partial d^{-b}(\widetilde{z},0)}{\partial\widetilde{y_j}}
   =-b\frac{1}{1+\gamma}\widetilde{y_j}d^{-b-2-2\gamma}(\widetilde{z},0),
\end{equation}
\begin{equation}\label{13}
\begin{aligned}
  \frac{\partial^2d^{-b}(\widetilde{z})}{\partial\widetilde{x_i}^2}
  =-&b\frac{1}{(1+\gamma)^2}|\widetilde{x}|^{2\gamma}d^{-b-2-2\gamma}(\widetilde{z},0)
\\-&2b\frac{\gamma}{(1+\gamma)^2}\widetilde{x_i}^2|\widetilde{x}|^{2\gamma-2}d^{-b-2-2\gamma}(\widetilde{z},0)
\\+&b(b+2+2\gamma)\frac{1}{(1+\gamma)^4}\widetilde{x_i}^2|\widetilde{x}|^{4\gamma}d^{-b-4-4\gamma}(\widetilde{z},0),
\end{aligned}
\end{equation}
\begin{equation}\label{14}
\begin{aligned}
  \frac{\partial^2d^{-b}(\widetilde{z})}{\partial\widetilde{y_j}^2}
  =-&b\frac{1}{1+\gamma}d^{-b-2-2\gamma}(\widetilde{z},0)
\\+&b(b+2+2\gamma)\frac{1}{(1+\gamma)^2}\widetilde{y_j}^2d^{-b-4-4\gamma}(\widetilde{z},0).
\end{aligned}
\end{equation}
Sum \eqref{13} from $i=1$ to $N$, we can obtain
\begin{equation}\label{15}
\begin{aligned}
  \Delta_{\widetilde{x}}d^{-b}(\widetilde{z},0)
  =-&bN\frac{1}{(1+\gamma)^2}|\widetilde{x}|^{2\gamma}d^{-b-2-2\gamma}(\widetilde{z},0)
  \\-&2b\frac{\gamma}{(1+\gamma)^2}|\widetilde{x}|^{2\gamma}d^{-b-2-2\gamma}(\widetilde{z},0)
  \\+&b(b+2+2\gamma)\frac{1}{(1+\gamma)^4}|\widetilde{x}|^{4\gamma+2}d^{-b-4-4\gamma}(\widetilde{z},0).
\end{aligned}
\end{equation}
Sum \eqref{14} from $j=1$ to $l$, we can obtain
\begin{equation}\label{16}
\begin{aligned}
  \Delta_{\widetilde{y}}d^{-b}(\widetilde{z},0)
  =-&bl\frac{1}{1+\gamma}d^{-b-2-2\gamma}(\widetilde{z},0)
  \\+&b(b+2+2\gamma)\frac{1}{(1+\gamma)^2}|\widetilde{y}|^2d^{-b-4-4\gamma}(\widetilde{z},0).
\end{aligned}
\end{equation}
By \eqref{15} plus \eqref{16} times $|\widetilde{x}|^{2\gamma}$, we can obtain
\begin{equation}\label{17}
  \Delta_\gamma^{(\widetilde{z})}d^{-b}(\widetilde{z},0)
  =b(b-N-(1+\gamma)l+2)\frac{1}{(1+\gamma)^2}|\widetilde{x}|^{2\gamma}d^{-b-2-2\gamma}(\widetilde{z},0).
\end{equation}
On the basis of our assumption for $b$, we take $b=N+(1+\gamma)l-2$, then the term \eqref{8} is equal to $0$.

Next we calculate the term
\begin{equation}\label{18}
\begin{aligned}
  &d^b(z(\widetilde{z}),0)\Delta_\gamma^{(\widetilde{z})}u(z(\widetilde{z}))
  \\=&d^{-b}(\widetilde{z},0)\Delta_\gamma^{(\widetilde{z})}u(z(\widetilde{z})).
\end{aligned}
\end{equation}
According to the chain rule of derivative, we have
\begin{equation}\label{19}
  \frac{\partial u}{\partial\widetilde{x_k}}=\sum^N_{i=1}\frac{\partial u}{\partial x_i}\frac{\partial x_i}{\partial\widetilde{x_k}}+\sum^l_{j=1}\frac{\partial u}{\partial y_j}\frac{\partial y_j}{\partial\widetilde{x_k}}
  ,\;\;\;\;k=1,\cdots,N,
\end{equation}
\begin{equation}\label{20}
  \frac{\partial u}{\partial\widetilde{y_h}}=\sum^N_{i=1}\frac{\partial u}{\partial x_i}\frac{\partial x_i}{\partial\widetilde{y_h}}+\sum^l_{j=1}\frac{\partial u}{\partial y_j}\frac{\partial y_j}{\partial\widetilde{y_h}}
  ,\;\;\;\;h=1,\cdots,l.
\end{equation}
Thus, we have
\begin{equation}\label{21}
\begin{aligned}
  \frac{\partial^2u}{\partial\widetilde{x_k}^2}=&\sum^N_{i=1}\frac{\partial^2u}{\partial x_i^2}\big(\frac{\partial x_i}{\partial\widetilde{x_k}}\big)^2+\sum^l_{j=1}\frac{\partial^2u}{\partial y_j^2}\big(\frac{\partial y_j}{\partial\widetilde{x_k}}\big)^2
  \\+&\sum^N_{i=1}\frac{\partial u}{\partial x_i}\frac{\partial^2x_i}{\partial\widetilde{x_k}^2}+\sum^l_{j=1}\frac{\partial u}{\partial y_j}\frac{\partial^2y_j}{\partial\widetilde{x_k}^2}
  \\+&\left(\sum^N_{\substack{i,i'=1\\i\neq i'}}\frac{\partial^2u}{\partial x_i\partial x_{i'}}\frac{\partial x_i}{\partial\widetilde{x_k}}\frac{\partial x_{i'}}{\partial\widetilde{x_k}}
  +\sum^l_{\substack{j,j'=1\\j\neq j'}}\frac{\partial^2u}{\partial y_j\partial y_{j'}}\frac{\partial y_j}{\partial\widetilde{x_k}}\frac{\partial y_{j'}}{\partial\widetilde{x_k}}
  +\sum^{N,l}_{i,j=1}\frac{\partial^2u}{\partial x_i\partial y_j}\frac{\partial x_i}{\partial\widetilde{x_k}}\frac{\partial y_j}{\partial\widetilde{x_k}}\right),
\end{aligned}
\end{equation}
for $k=1,\cdots,N$ and
\begin{equation}\label{22}
\begin{aligned}
  \frac{\partial^2u}{\partial\widetilde{y_h}^2}=&\sum^N_{i=1}\frac{\partial^2u}{\partial x_i^2}\big(\frac{\partial x_i}{\partial\widetilde{y_h}}\big)^2+\sum^l_{j=1}\frac{\partial^2u}{\partial y_j^2}\big(\frac{\partial y_j}{\partial\widetilde{y_h}}\big)^2
  \\+&\sum^N_{i=1}\frac{\partial u}{\partial x_i}\frac{\partial^2x_i}{\partial\widetilde{y_h}^2}+\sum^l_{j=1}\frac{\partial u}{\partial y_j}\frac{\partial^2y_j}{\partial\widetilde{y_h}^2}
  \\+&\left(\sum^N_{\substack{i,i'=1\\i\neq i'}}\frac{\partial^2u}{\partial x_i\partial x_{i'}}\frac{\partial x_i}{\partial\widetilde{y_h}}\frac{\partial x_{i'}}{\partial\widetilde{y_h}}
  +\sum^l_{\substack{j,j'=1\\j\neq j'}}\frac{\partial^2u}{\partial y_j\partial y_{j'}}\frac{\partial y_j}{\partial\widetilde{y_h}}\frac{\partial y_{j'}}{\partial\widetilde{y_h}}
  +\sum^{N,l}_{i,j=1}\frac{\partial^2u}{\partial x_i\partial y_j}\frac{\partial x_i}{\partial\widetilde{y_h}}\frac{\partial y_j}{\partial\widetilde{y_h}}\right),
\end{aligned}
\end{equation}
for $h=1,\cdots,l$.\\
Note that $\Delta_\gamma^{(\widetilde{z})}u(z(\widetilde{z}))=\Delta_{\widetilde{x}}u(z(\widetilde{z}))
+|\widetilde{x}|^{2\gamma}\Delta_{\widetilde{y}}u(z(\widetilde{z}))$. So by \eqref{21} and \eqref{22} we have
\begin{equation*}
\begin{aligned}
  &\Delta_\gamma^{(\widetilde{z})}u(z(\widetilde{z}))
  \\=&\sum^N_{i=1}\frac{\partial^2u}{\partial x_i^2}\left[\sum^N_{k=1}\big(\frac{\partial x_i}{\partial\widetilde{x_k}}\big)^2+|\widetilde{x}|^{2\gamma}\sum^l_{h=1}\big(\frac{\partial x_i}{\partial\widetilde{y_h}}\big)^2\right]
  \\+&\sum^l_{j=1}\frac{\partial^2u}{\partial y_j^2}\left[\sum^N_{k=1}\big(\frac{\partial y_j}{\partial\widetilde{x_k}}\big)^2+|\widetilde{x}|^{2\gamma}\sum^l_{h=1}\big(\frac{\partial y_j}{\partial\widetilde{y_h}}\big)^2\right]
  \\+&\sum^N_{i=1}\frac{\partial u}{\partial x_i}\left(\sum^N_{k=1}\frac{\partial^2x_i}{\partial\widetilde{x_k}^2}+|\widetilde{x}|^{2\gamma}
  \sum^l_{h=1}\frac{\partial^2x_i}{\partial\widetilde{y_h}^2}\right)
  \\+&\sum^l_{j=1}\frac{\partial u}{\partial y_j}\left(\sum^N_{k=1}\frac{\partial^2y_j}{\partial\widetilde{x_k}^2}+|\widetilde{x}|^{2\gamma}
  \sum^l_{h=1}\frac{\partial^2y_j}{\partial\widetilde{y_h}^2}\right)
\end{aligned}
\end{equation*}
\begin{equation}\label{23}
\begin{aligned}
  +&\sum^N_{\substack{i,i'=1\\i\neq i'}}\frac{\partial^2u}{\partial x_i\partial x_{i'}}\left(\sum^N_{k=1}\frac{\partial x_i}{\partial\widetilde{x_k}}\frac{\partial x_{i'}}{\partial\widetilde{x_k}}+|\widetilde{x}|^{2\gamma}\sum^l_{h=1}\frac{\partial x_i}{\partial\widetilde{y_h}}\frac{\partial x_{i'}}{\partial\widetilde{y_h}}\right)
  \\+&\sum^l_{\substack{j,j'=1\\j\neq j'}}\frac{\partial^2u}{\partial y_j\partial y_{j'}}\left(\sum^N_{k=1}\frac{\partial y_j}{\partial\widetilde{x_k}}\frac{\partial y_{j'}}{\partial\widetilde{x_k}}+|\widetilde{x}|^{2\gamma}\sum^l_{h=1}\frac{\partial y_j}{\partial\widetilde{y_h}}\frac{\partial y_{j'}}{\partial\widetilde{y_h}}\right)
  \\+&\sum^{N,l}_{i,j=1}\frac{\partial^2u}{\partial x_i\partial y_j}\left(\sum^N_{k=1}\frac{\partial x_i}{\partial\widetilde{x_k}}\frac{\partial y_j}{\partial\widetilde{x_k}}+|\widetilde{x}|^{2\gamma}\sum^l_{h=1}\frac{\partial x_i}{\partial\widetilde{y_h}}\frac{\partial y_j}{\partial\widetilde{y_h}}\right).
\end{aligned}
\end{equation}
We also take some derivatives for preparation as same as before.\\
According to \eqref{5}, i.e.
\begin{equation}\label{24}
  x_i=\widetilde{x_i}d^{-2}(\widetilde{z},0),\;\;\;\;y_j=\widetilde{y_j}d^{-2-2\gamma}(\widetilde{z},0),
\end{equation}
for $i=1,\cdots,N$, $J=1,\cdots,l$ and according to \eqref{9}, \eqref{10}, we have
\begin{flalign}\label{25}
\frac{\partial x_i}{\partial\widetilde{x_k}}=
  \begin{cases}
d^{-2}(\widetilde{z},0)-2\frac{1}{(1+\gamma)^2}\widetilde{x_i}^2|\widetilde{x}|^{2\gamma}d^{-4-2\gamma}(\widetilde{z},0),
    &~\mbox{when}~k=i,\\
 -2\frac{1}{(1+\gamma)^2}\widetilde{x_i}\widetilde{x_k}|\widetilde{x}|^{2\gamma}d^{-4-2\gamma}(\widetilde{z},0), & ~\mbox{when}~k\neq i.
  \end{cases}
\end{flalign}
\begin{equation}\label{26}
  \frac{\partial x_i}{\partial\widetilde{y_h}}=-2\frac{1}{1+\gamma}\widetilde{x_i}\widetilde{y_h}d^{-4-2\gamma}(\widetilde{z},0).
\end{equation}
\begin{equation}\label{27}
  \frac{\partial y_j}{\partial\widetilde{x_k}}=-2\frac{1}{1+\gamma}\widetilde{y_j}\widetilde{x_k}|\widetilde{x}|^{2\gamma}
  d^{-4-4\gamma}(\widetilde{z},0).
\end{equation}
\begin{flalign}\label{28}
\frac{\partial y_j}{\partial\widetilde{y_h}}=
  \begin{cases}
d^{-2-2\gamma}(\widetilde{z},0)-2\widetilde{y_j}^2d^{-4-4\gamma}(\widetilde{z},0),
    &~\mbox{when}~h=j,\\
 -2\widetilde{y_j}\widetilde{y_h}d^{-4-4\gamma}(\widetilde{z},0), & ~\mbox{when}~h\neq j.
  \end{cases}
\end{flalign}
Moreover, we need to take the second derivatives and the square of first derivatives from \eqref{25}-\eqref{28}.
\begin{equation}\label{29}
\begin{aligned}
  \frac{\partial^2x_i}{\partial\widetilde{x_i}^2}=
  &-6\frac{1}{(1+\gamma)^2}\widetilde{x_i}|\widetilde{x}|^{2\gamma}d^{-4-2\gamma}(\widetilde{z},0)
\\&-4\gamma\frac{1}{(1+\gamma)^2}\widetilde{x_i}^3|\widetilde{x}|^{2\gamma-2}d^{-4-2\gamma}(\widetilde{z},0)
\\&+4(2+\gamma)\frac{1}{(1+\gamma)^4}\widetilde{x_i}^3|\widetilde{x}|^{4\gamma}d^{-6-4\gamma}(\widetilde{z},0).
\end{aligned}
\end{equation}
\begin{equation}\label{30}
\begin{aligned}
  \frac{\partial^2x_i}{\partial\widetilde{x_k}^2}=
  &-2\frac{1}{(1+\gamma)^2}\widetilde{x_i}|\widetilde{x}|^{2\gamma}d^{-4-2\gamma}(\widetilde{z},0)
\\&-4\gamma\frac{1}{(1+\gamma)^2}\widetilde{x_i}\widetilde{x_k}^2|\widetilde{x}|^{2\gamma-2}d^{-4-2\gamma}(\widetilde{z},0)
\\&+4(2+\gamma)\frac{1}{(1+\gamma)^4}\widetilde{x_i}\widetilde{x_k}^2|\widetilde{x}|^{4\gamma}
d^{-6-4\gamma}(\widetilde{z},0), ~\mbox{when}~k\neq i.
\end{aligned}
\end{equation}
\begin{equation}\label{31}
  \frac{\partial^2x_i}{\partial\widetilde{y_h}^2}=-2\frac{1}{1+\gamma}\widetilde{x_i}d^{-4-2\gamma}(\widetilde{z},0)
  +4(2+\gamma)\frac{1}{(1+\gamma)^2}\widetilde{x_i}\widetilde{y_h}^2d^{-6-4\gamma}(\widetilde{z},0).
\end{equation}
\begin{equation}\label{32}
\begin{aligned}
  \frac{\partial^2y_j}{\partial\widetilde{x_k}^2}=
  &-2\frac{1}{1+\gamma}\widetilde{y_j}|\widetilde{x}|^{2\gamma}d^{-4-4\gamma}(\widetilde{z},0)
\\&-4\gamma\frac{1}{1+\gamma}\widetilde{y_j}\widetilde{x_k}^2|\widetilde{x}|^{2\gamma-2}d^{-4-4\gamma}(\widetilde{z},0)
\\&+8\frac{1}{(1+\gamma)^2}\widetilde{y_j}\widetilde{x_k}^2|\widetilde{x}|^{4\gamma}d^{-6-6\gamma}(\widetilde{z},0).
\end{aligned}
\end{equation}
\begin{equation}\label{33}
  \frac{\partial^2 y_j}{\partial\widetilde{y_j}^2}=-6\widetilde{y_j}d^{-4-4\gamma}(\widetilde{z},0)
  +8\widetilde{y_j}^3d^{-6-6\gamma}(\widetilde{z},0).
\end{equation}
\begin{equation}\label{34}
  \frac{\partial^2 y_j}{\partial\widetilde{y_h}^2}=-2\widetilde{y_j}d^{-4-4\gamma}(\widetilde{z},0)
  +8\widetilde{y_j}\widetilde{y_h}^2d^{-6-6\gamma}(\widetilde{z},0), ~\mbox{when}~h\neq j.
\end{equation}
\begin{flalign}\label{35}
\big(\frac{\partial x_i}{\partial\widetilde{x_k}}\big)^2=
  \begin{cases}
d^{-4}(\widetilde{z},0)+4\frac{1}{(1+\gamma)^4}\widetilde{x_i}^4|\widetilde{x}|^{4\gamma}d^{-8-4\gamma}(\widetilde{z},0)
-4\frac{1}{(1+\gamma)^2}\widetilde{x_i}^2|\widetilde{x}|^{2\gamma}d^{-6-2\gamma}(\widetilde{z},0),
    &k=i,\\
 4\frac{1}{(1+\gamma)^4}\widetilde{x_i}^2\widetilde{x_k}^2|\widetilde{x}|^{4\gamma}d^{-8-4\gamma}(\widetilde{z},0), & k\neq i.
  \end{cases}
\end{flalign}
\begin{equation}\label{36}
  \big(\frac{\partial x_i}{\partial\widetilde{y_h}}\big)^2=4\frac{1}{(1+\gamma)^2}\widetilde{x_i}^2\widetilde{y_h}^2
  d^{-8-4\gamma}(\widetilde{z},0).
\end{equation}
\begin{equation}\label{37}
  \big(\frac{\partial y_j}{\partial\widetilde{x_k}}\big)^2=4\frac{1}{(1+\gamma)^2}\widetilde{y_j}^2\widetilde{x_k}^2
  |\widetilde{x}|^{4\gamma}d^{-8-8\gamma}(\widetilde{z},0).
\end{equation}
\begin{flalign}\label{38}
\big(\frac{\partial y_j}{\partial\widetilde{y_h}}\big)^2=
  \begin{cases}
d^{-4-4\gamma}(\widetilde{z},0)+4\widetilde{y_j}^4d^{-8-8\gamma}(\widetilde{z},0)-4\widetilde{y_j}^2
d^{-6-6\gamma}(\widetilde{z},0),
    &h=j,\\
 4\widetilde{y_j}^2\widetilde{y_h}^2d^{-8-8\gamma}(\widetilde{z},0), & h\neq j.
  \end{cases}
\end{flalign}
From \eqref{35} and \eqref{36}, we know that
\begin{equation}\label{39}
  \sum^N_{k=1}\big(\frac{\partial x_i}{\partial\widetilde{x_k}}\big)^2+|\widetilde{x}|^{2\gamma}\sum^l_{h=1}\big(\frac{\partial x_i}{\partial\widetilde{y_h}}\big)^2=d^{-4}(\widetilde{z},0).
\end{equation}
From \eqref{37} and \eqref{38}, we know that
\begin{equation}\label{40}
  \sum^N_{k=1}\big(\frac{\partial y_j}{\partial\widetilde{x_k}}\big)^2+|\widetilde{x}|^{2\gamma}\sum^l_{h=1}\big(\frac{\partial y_j}{\partial\widetilde{y_h}}\big)^2=|\widetilde{x}|^{2\gamma}d^{-4-4\gamma}(\widetilde{z},0).
\end{equation}
From \eqref{29}-\eqref{31}, we know that
\begin{equation}\label{41}
  \sum^N_{k=1}\frac{\partial^2x_i}{\partial\widetilde{x_k}^2}+|\widetilde{x}|^{2\gamma}
  \sum^l_{h=1}\frac{\partial^2x_i}{\partial\widetilde{y_h}^2}
  =\frac{2(2-N-l-l\gamma)}{(1+\gamma)^2}\widetilde{x_i}|\widetilde{x}|^{2\gamma}d^{-4-2\gamma}(\widetilde{z},0).
\end{equation}
From \eqref{32}-\eqref{34}, we know that
\begin{equation}\label{42}
  \sum^N_{k=1}\frac{\partial^2y_j}{\partial\widetilde{x_k}^2}+|\widetilde{x}|^{2\gamma}
  \sum^l_{h=1}\frac{\partial^2y_j}{\partial\widetilde{y_h}^2}
  =\frac{2(2-N-l-l\gamma)}{1+\gamma}\widetilde{y_j}|\widetilde{x}|^{2\gamma}d^{-4-4\gamma}(\widetilde{z},0).
\end{equation}
From \eqref{25}-\eqref{28}, we know that
\begin{equation}\label{43}
\begin{aligned}
  &\sum^N_{k=1}\frac{\partial x_i}{\partial\widetilde{x_k}}\frac{\partial x_{i'}}{\partial\widetilde{x_k}}+|\widetilde{x}|^{2\gamma}\sum^l_{h=1}\frac{\partial x_i}{\partial\widetilde{y_h}}\frac{\partial x_{i'}}{\partial\widetilde{y_h}}
  \\=&\sum^N_{k=1}\frac{\partial y_j}{\partial\widetilde{x_k}}\frac{\partial y_{j'}}{\partial\widetilde{x_k}}+|\widetilde{x}|^{2\gamma}\sum^l_{h=1}\frac{\partial y_j}{\partial\widetilde{y_h}}\frac{\partial y_{j'}}{\partial\widetilde{y_h}}
  \\=&\sum^N_{k=1}\frac{\partial x_i}{\partial\widetilde{x_k}}\frac{\partial y_j}{\partial\widetilde{x_k}}+|\widetilde{x}|^{2\gamma}\sum^l_{h=1}\frac{\partial x_i}{\partial\widetilde{y_h}}\frac{\partial y_j}{\partial\widetilde{y_h}}
  \\=&0.
\end{aligned}
\end{equation}
Thus according to \eqref{18}, \eqref{23}, \eqref{39}-\eqref{43}, we have calculated the first term in \eqref{7} as follows.
\begin{equation}\label{44}
\begin{aligned}
  &d^b(z(\widetilde{z}),0)\Delta_\gamma^{(\widetilde{z})}u(z(\widetilde{z}))
  \\=&d^{4+b}(z,0)\Delta_\gamma u(z)
  \\+&\sum^N_{i=1}\frac{\partial u}{\partial x_i}\left(\frac{2(2-N-l-l\gamma)}{(1+\gamma)^2}\widetilde{x_i}|\widetilde{x}|^{2\gamma}
  d^{-4-b-2\gamma}(\widetilde{z},0)\right)
  \\+&\sum^l_{j=1}\frac{\partial u}{\partial y_j}\left(\frac{2(2-N-l-l\gamma)}{1+\gamma}\widetilde{y_j}|\widetilde{x}|^{2\gamma}
  d^{-4-b-4\gamma}(\widetilde{z},0)\right).
\end{aligned}
\end{equation}

Finally, we will calculate the last term in \eqref{7} and will find the coefficients of first derivatives.\\
By \eqref{11}-\eqref{12}, \eqref{19}-\eqref{20}, \eqref{25}-\eqref{28}, we have
\begin{equation}\label{45}
\begin{aligned}
     &2\left(\nabla_{\widetilde{x}}u(z(\widetilde{z}))\cdot\nabla_{\widetilde{x}}d^b(z(\widetilde{z}),0)
  +|\widetilde{x}|^{2\gamma}
  \nabla_{\widetilde{y}}u(z(\widetilde{z}))\cdot\nabla_{\widetilde{y}}d^b(z(\widetilde{z}),0)\right)
  \\=&2\sum^N_{i=1}\frac{\partial u}{\partial x_i}\left(\sum^N_{k=1}\frac{\partial x_i}{\partial\widetilde{x_k}}
  \frac{\partial d^{-b}(\widetilde{z},0)}{\partial\widetilde{x_k}}+|\widetilde{x}|^{2\gamma}\sum^l_{h=1}
  \frac{\partial x_i}{\partial\widetilde{y_h}}\frac{\partial d^{-b}(\widetilde{z},0)}{\partial\widetilde{y_h}}\right)
  \\+&2\sum^l_{j=1}\frac{\partial u}{\partial y_j}\left(\sum^N_{k=1}\frac{\partial y_j}{\partial\widetilde{x_k}}
  \frac{\partial d^{-b}(\widetilde{z},0)}{\partial\widetilde{x_k}}+|\widetilde{x}|^{2\gamma}\sum^l_{h=1}
  \frac{\partial y_j}{\partial\widetilde{y_h}}\frac{\partial d^{-b}(\widetilde{z},0)}{\partial\widetilde{y_h}}\right)
   \\=&2\sum^N_{i=1}\frac{\partial u}{\partial x_i}\left(b\frac{1}{(1+\gamma)^2}\widetilde{x_i}|\widetilde{x}|^{2\gamma}d^{-4-b-2\gamma}(\widetilde{z},0)\right)
    \\+&2\sum^l_{j=1}\frac{\partial u}{\partial y_j}\left(b\frac{1}{1+\gamma}\widetilde{y_j}|\widetilde{x}|^{2\gamma}d^{-4-b-4\gamma}(\widetilde{z},0)\right).
\end{aligned}
\end{equation}
Put $b=N+(1+\gamma)l-2$ into \eqref{44} and \eqref{45} respectively and plus them, we obtain
\begin{equation}\label{46}
  \Delta_\gamma^{(\widetilde{z})}w(\widetilde{z})=d^{N+(1+\gamma)l+2}(z,0)\Delta_\gamma u(z)=d^{N_\gamma+2}(z,0)\Delta_\gamma u(z).
\end{equation}
In conclusion, we have proved Theorem \ref{th2}.
\\
\end{proof}

Next we give an application of Kelvin transformation. We consider the following equation
\begin{equation}\label{47}
  -\Delta_\gamma u=|u|^{p-1}u,\;\;z\in\R^{N+l}.
\end{equation}
Then under the Kelvin transformation, ~$w(\widetilde{z})$~satisfies
\begin{equation}\label{48}
  -\Delta_\gamma^{(\widetilde{z})}w(\widetilde{z})=d^{(N+(1+\gamma)l-2)p-(N+(1+\gamma)l)-2}
  (\widetilde{z},0)|w(\widetilde{z})|^{p-1}w(\widetilde{z}),\;\widetilde{z}\in\R^{N+l}\setminus\{0\}.
\end{equation}
Especially, if we take $p=2_\gamma^*-1$, where $2_\gamma^*=\frac{2N_\gamma}{N_\gamma-2}$ and $N_\gamma=N+(1+\gamma)l$, then~$w(\widetilde{z})$~satisfies
\begin{equation}\label{49}
  -\Delta_\gamma^{(\widetilde{z})}w(\widetilde{z})=
  |w(\widetilde{z})|^{2_\gamma^*-2}w(\widetilde{z}),\;\widetilde{z}\in\R^{N+l}\setminus\{0\}.
\end{equation}
Now we prove that $w(\widetilde{z})$ actually satisfies
\begin{equation}\label{50}
  -\Delta_\gamma^{(\widetilde{z})}w(\widetilde{z})=
  |w(\widetilde{z})|^{2_\gamma^*-2}w(\widetilde{z}),\;\widetilde{z}\in\R^{N+l}.
\end{equation}

\begin{proof}[\textbf{Proof of Corollary \ref{th3}:}]
First, we calculate that
\begin{equation}\label{51}
\begin{aligned}
   &\int_{\R^{N+l}}|w(\widetilde{z})|^{2_\gamma^*}d\widetilde{z}
\\\leq&\int_{\R^{N+l}}|d^{N_\gamma-2}(z,0)u(z)|^{\frac{2N_\gamma}{N_\gamma-2}}
\left(\frac{1}{d^2(z,0)}\right)^N\left(\frac{1}{d^{2+2\gamma}(z,0)}\right)^ldz
\\=&\int_{\R^{N+l}}|u(z)|^{2_\gamma^*}dz.
\end{aligned}
\end{equation}
Take $\xi(\widetilde{z})=\xi(|\widetilde{z}|)\in C^1(\R^{N+l})$ satisfying $\xi(\widetilde{z})=0$ in $B_1(0)$, $\xi(\widetilde{z})=1$ in $\big(B_2(0)\big)^c$, and $\xi(\widetilde{z})$ is non-decreasing on $|\widetilde{z}|$ in $B_2(0)\setminus B_1(0)$, where $\widetilde{z}=(\widetilde{x},\widetilde{y})$, $|\widetilde{z}|=\big(\frac{1}{(1+\gamma)^2}|\widetilde{x}|^{2+2\gamma}+|\widetilde{y}|^2\big)^\frac{1}{2+2\gamma}$ and $B_r(0)=\left\{\widetilde{z}\in\R^{N+l}|\;\big(\frac{1}{(1+\gamma)^2}
|\widetilde{x}|^{2+2\gamma}+|\widetilde{y}|^2\big)^\frac{1}{2+2\gamma}<r\right\}$.
Denote $\xi_h(\widetilde{z})=\xi\left(\frac{\widetilde{z}}{h^\frac{N+l}{N+(1+\gamma)l}}\right)$. For any $\varphi(\widetilde{z})\in C_0^\infty(\R^{N+l})$, we take $\xi_h^2(\widetilde{z})\varphi(\widetilde{z})$ as test function.
It follows \eqref{49} that
\begin{equation}\label{52}
  \int_{\R^{N+l}}\nabla_\gamma^{(\widetilde{z})}w(\widetilde{z})\cdot\nabla_\gamma^{(\widetilde{z})}
  \big(\xi_h^2(\widetilde{z})\varphi(\widetilde{z})\big)d\widetilde{z}
  =\int_{\R^{N+l}}|w(\widetilde{z})|^{2_\gamma^*-2}w(\widetilde{z})\xi_h^2(\widetilde{z})\varphi(\widetilde{z})
  d\widetilde{z}.
\end{equation}
Especially, we substitute $w(\widetilde{z})\varphi^2(\widetilde{z})$ for $\varphi(\widetilde{z})$ and obtain
\begin{equation}\label{53}
  \int_{\R^{N+l}}\nabla_\gamma^{(\widetilde{z})}w(\widetilde{z})\cdot\nabla_\gamma^{(\widetilde{z})}
  \big(\xi_h^2(\widetilde{z})w(\widetilde{z})\varphi^2(\widetilde{z})\big)d\widetilde{z}
  =\int_{\R^{N+l}}|w(\widetilde{z})|^{2_\gamma^*}\xi_h^2(\widetilde{z})\varphi^2(\widetilde{z})d\widetilde{z}.
\end{equation}
Since $\xi_h(\widetilde{z})\rightarrow1\;a.e.$ when $h\rightarrow0$, by Lebesgue's dominated convergence theorem we have
\begin{equation}\label{54}
  \lim_{h\rightarrow0}\int_{\R^{N+l}}|w(\widetilde{z})|^{2_\gamma^*}\xi_h^2(\widetilde{z})\varphi^2(\widetilde{z})
  d\widetilde{z}
  =\int_{\R^{N+l}}|w(\widetilde{z})|^{2_\gamma^*}\varphi^2(\widetilde{z})d\widetilde{z}.
\end{equation}
Then according to Cauchy inequality we have
\begin{equation}\label{55}
\begin{aligned}
  &\int_{\R^{N+l}}\nabla_\gamma^{(\widetilde{z})}w(\widetilde{z})\cdot\nabla_\gamma^{(\widetilde{z})}
  \big(\xi_h^2(\widetilde{z})w(\widetilde{z})\varphi^2(\widetilde{z})\big)d\widetilde{z}
\\=&\int_{\R^{N+l}}|\nabla_\gamma^{(\widetilde{z})} w(\widetilde{z})|^2\xi_h^2(\widetilde{z})\varphi^2(\widetilde{z})d\widetilde{z}
+2\int_{\R^{N+l}}\nabla_\gamma^{(\widetilde{z})} w(\widetilde{z})\xi_h(\widetilde{z})\varphi(\widetilde{z})w(\widetilde{z})
\nabla_\gamma^{(\widetilde{z})}\big(\xi_h(\widetilde{z})\varphi(\widetilde{z})\big)d\widetilde{z}
\\\geq&\frac{1}{2}\int_{\R^{N+l}}|\nabla_\gamma^{(\widetilde{z})} w(\widetilde{z})|^2\xi_h^2(\widetilde{z})\varphi^2(\widetilde{z})d\widetilde{z}
-C\int_{\R^{N+l}}w^2(\widetilde{z})|\nabla_\gamma^{(\widetilde{z})} \big(\xi_h(\widetilde{z})\varphi(\widetilde{z})\big)|^2d\widetilde{z}.
\end{aligned}
\end{equation}
Since
\begin{equation}\label{56}
\begin{aligned}
 &\int_{\R^{N+l}}w^2(\widetilde{z})|\nabla_\gamma^{(\widetilde{z})} \big(\xi_h(\widetilde{z})\varphi(\widetilde{z})\big)|^2d\widetilde{z}
\\=& \int_{B_{2h}(0)}w^2(\widetilde{z})|\nabla_\gamma^{(\widetilde{z})} \big(\xi_h(\widetilde{z})\varphi(\widetilde{z})\big)|^2d\widetilde{z}
+\int_{{\R^{N+l}}\setminus{B_{2h}(0)}}w^2(\widetilde{z})|\nabla_\gamma^{(\widetilde{z})} \big(\xi_h(\widetilde{z})\varphi(\widetilde{z})\big)|^2d\widetilde{z}
\\\leq&C\frac{1}{h^\frac{2(N+l)}{N+(1+\gamma)l}}\int_{B_{2h}(0)}w^2(\widetilde{z})d\widetilde{z}
+\int_{{\R^{N+l}}\setminus{B_{2h}(0)}}w^2(\widetilde{z})|\nabla_\gamma^{(\widetilde{z})} \varphi(\widetilde{z})|^2d\widetilde{z}
\\\leq&C\left(\int_{B_{2h}(0)}w^{2^*_\gamma}(\widetilde{z})d\widetilde{z}\right)^{\frac{2}{2^*_\gamma}}
+\int_{\R^{N+l}}w^2(\widetilde{z})|\nabla_\gamma^{(\widetilde{z})} \varphi(\widetilde{z})|^2d\widetilde{z}
\leq C,
\end{aligned}
\end{equation}
in which $\left(\displaystyle\int_{B_{2h}(0)}1d\widetilde{z}\right)^{1-\frac{2}{2^*_\gamma}}=(2h)^{\frac{2(N+l)}{N+(1+\gamma)l}}$
, $C$ is independent of $h$.\\
From \eqref{51}, \eqref{53}-\eqref{56} we know that
\begin{equation}\label{57}
  \int_{B_R(0)}|\nabla_\gamma^{(\widetilde{z})} w(\widetilde{z})|^2\leq C,\;\;\;\;\forall R>0.
\end{equation}
Look at the LHS of \eqref{52},
\begin{equation}\label{58}
\begin{aligned}
  &\int_{\R^{N+l}}\nabla_\gamma^{(\widetilde{z})}w(\widetilde{z})\cdot\nabla_\gamma^{(\widetilde{z})}
  \big(\xi_h^2(\widetilde{z})\varphi(\widetilde{z})\big)d\widetilde{z}
  \\=&2\int_{\R^{N+l}}\nabla_\gamma^{(\widetilde{z})}w(\widetilde{z})\xi_h(\widetilde{z})\nabla_\gamma^{(\widetilde{z})}
\xi_h(\widetilde{z})\varphi(\widetilde{z})d\widetilde{z}+\int_{\R^{N+l}}\xi_h^2(\widetilde{z})
\nabla_\gamma^{(\widetilde{z})}w(\widetilde{z})\cdot\nabla_\gamma^{(\widetilde{z})}\varphi(\widetilde{z})d\widetilde{z}.
\end{aligned}
\end{equation}
According to \eqref{57}, we see that as $h\rightarrow0$,
\begin{equation}\label{59}
\begin{aligned}
  &\left|\int_{\R^{N+l}}\nabla_\gamma^{(\widetilde{z})}w(\widetilde{z})\xi_h(\widetilde{z})\nabla_\gamma^{(\widetilde{z})}
\xi_h(\widetilde{z})\varphi(\widetilde{z})d\widetilde{z}\right|
\\\leq&C\frac{1}{h^\frac{N+l}{N+(1+\gamma)l}}\int_{B_{2h}(0)}|\nabla_\gamma^{(\widetilde{z})}w(\widetilde{z})|d\widetilde{z}
\\\leq&Ch^{(N+l)(\frac{1}{2}-\frac{1}{N_\gamma})}\left(\int_{B_{2h}(0)}|\nabla_\gamma^{(\widetilde{z})}w(\widetilde{z})|^2
d\widetilde{z}\right)^\frac{1}{2}\rightarrow0,
\end{aligned}
\end{equation}
where $(N+l)(\frac{1}{2}-\frac{1}{N_\gamma})>0$ since $N+l>3$.\\
Thus, \eqref{52}, \eqref{58} and \eqref{59} give us
\begin{equation}\label{60}
  \int_{\R^{N+l}}\xi_h^2(\widetilde{z})
\nabla_\gamma^{(\widetilde{z})}w(\widetilde{z})\cdot\nabla_\gamma^{(\widetilde{z})}\varphi(\widetilde{z})d\widetilde{z}
+o_h(1)=\int_{\R^{N+l}}|w(\widetilde{z})|^{2_\gamma^*-2}w(\widetilde{z})\xi_h^2(\widetilde{z})\varphi(\widetilde{z})
  d\widetilde{z},\;
   \forall\varphi(\widetilde{z})\in C_0^\infty(\R^{N+l}).
\end{equation}
Letting $h\rightarrow0$ in \eqref{60}, finally we get
\begin{equation}\label{61}
  \int_{\R^{N+l}}
\nabla_\gamma^{(\widetilde{z})}w(\widetilde{z})\cdot\nabla_\gamma^{(\widetilde{z})}\varphi(\widetilde{z})d\widetilde{z}
=\int_{\R^{N+l}}|w(\widetilde{z})|^{2_\gamma^*-2}w(\widetilde{z})\varphi(\widetilde{z})
  d\widetilde{z},\;
   \forall\varphi(\widetilde{z})\in C_0^\infty(\R^{N+l}).
\end{equation}
This is the definition of weak solution of equation \eqref{50}.
\end{proof}

\textbf{Acknowledgments}

The authors are grateful to the referees for their careful reading and valuable comments.
\\*

\textbf{Data Availability Statement}

No data, models, or code were generated or used during this study.
\\*

\textbf{Conflict of interest statement}

The authors declare that they have no conflicts of interest in the research presented in this manuscript.
\\*

\end{document}